\tikzset{negated/.style={
		decoration={markings,
			mark= at position 0.5 with {
				\node[transform shape] (tempnode) {$\times$};
			}
		},
		postaction={decorate}
	}
}
\newtheorem{theorem}{Theorem}
\newtheorem{lemma}[theorem]{Lemma}
\newtheorem{proposition}[theorem]{Proposition}
\newtheorem{remark}[theorem]{Remark}
\newcommand{\Irr}{\textnormal{Irr}}
\newcommand{\cd}{\textnormal{cd}}
\newcommand{\nl}{\textnormal{nl}}
\newcommand{\lin}{\textnormal{lin}}
\newcommand{\gal}{\textnormal{Gal}}
\newcommand{\aut}{\textnormal{Aut}}
\title[]{A Combinatorial Formula for the Wedderburn Decomposition of Rational Group Algebras of Split Metacyclic $p$-groups}
\author{Ram Karan Choudhary}
\address{Indian Institute of Technology, Bhubaneswar, Arugul Campus, Jatni, Khurda-752050, India.}
\email{ramkchoudhary1997@gmail.com}
\author{Sunil Kumar Prajapati$^*$}
\address{Indian Institute of Technology, Bhubaneswar, Arugul Campus, Jatni, Khurda-752050, India.}
\email{skprajapati@iitbbs.ac.in}
\thanks{$^{\textbf{*}}$ Corresponding author.
	%\\ Dilpreet Kaur was supported by SERB National Postdoctoral Fellowship PDF/2017/000188 of the Department of Science \& Technology, India. Amritanshu Prasad was supported by a Swarnajayanti Fellowship of the Department of Science \& Technology, India
}
\subjclass[2020]{primary 20C05; secondary 20C15, 20D15}
\keywords{Rational group algebras, Wedderburn decomposition, metacyclic $p$-groups}
\begin{document}
	\maketitle

	\begin{abstract}
		In this article, we present a concise combinatorial formula for efficiently determining the Wedderburn decomposition of rational group algebra associated with a split metacyclic $p$-group $G$, where $p$ is an odd prime. We also provide a combinatorial formula to count irreducible rational representations of $G$ of distinct degrees. 
	\end{abstract}
	
	\section{Introduction}
	Let $p$ be an odd prime and let $G$ be a finite non-abelian split metacyclic $p$-group. Then $G$ can be presented in the form
% We write $G = N \rtimes H$, where $N = \langle a \rangle \cong C_{p^n}$ (the cyclic group of order $p^n$) and $H = \langle b \rangle \cong C_{p^m}$ (the cyclic group of order $p^m$). It is well known that $\aut(N) = \langle f_r$ $\mid f_r : a\mapsto a^r\rangle \cong (\mathbb{Z}/p^n\mathbb{Z})^*$ (the group of units of the ring $\mathbb{Z}/p^n\mathbb{Z}$),  where $(r, p^n) = 1$. Moreover, $\aut(N) \cong A\times B$, where $A = \langle \tau : a\mapsto a^s \rangle \cong C_{p-1}$, $s$ coprime to $p-1$ and $B = \langle \sigma : a\mapsto a^{1+p}\rangle \cong C_{p^{n-1}}$. Then the semi-direct product group, $G = N \rtimes H$ has a presentation as follows:
\begin{equation}\label{prest:splitmetacyclic}
G = \langle a, b\mid a^{p^n} = b^{p^m} = 1, bab^{-1} = a^r \rangle,
\end{equation}
%	$$G = \langle a, b\mid a^{p^n} = b^{p^m} = 1, bab^{-1} = a^r \rangle, $$
	where $n\geq 2, m \geq 1$, $(r, p^n) = 1$ and $r$ has multiplicative order $p^s$ modulo $p^n$ such that $1\leq s \leq \min \{n-1, m\}$.
	In this article, we study the Wedderburn decomposition of $\mathbb{Q}G$.
%	, where $G = N \rtimes H$. Now, let $G$ and $\mathbb{F}$ denote a finite group and a field, respectively.
%	 For a semisimple group algebra $\mathbb{F}G$, the Wedderburn components are matrix algebras over finite extensions of $\mathbb{F}$ in the case of positive characteristic, and Brauer equivalent to cyclotomic algebras in the case of zero characteristic, as per the Brauer-Witt Theorem (see \cite{Yam}). Further, the Wedderburn decomposition of $\mathbb{F}G$ aids in describing the automorphisms group of $\mathbb{F}G$ (see \cite{Herman, Olivieri}) or studying the unit group of the integral group ring $\mathbb{Z}G$ when $\mathbb{F}=\mathbb{Q}$ (see \cite{Jes-Lea, Jes-Rio, Rio-Rui, Rit-Seh}). 
	 For various family of groups, the Wedderburn decompositions of rational group algebras have been extensively studied in \cite{ BG1, BG, BM14, Jes-Lea-Paq, ODRS04, Olt07, PW} and the authors used various concepts such as computation of the field of character values, Shoda pairs, numerical representation of cyclotomic algebras, etc, to compute the simple components of the rational group algebras. For a non-abelian split metacyclic $p$-group $G$ defined in \eqref{prest:splitmetacyclic}, we prove Theorem \ref{thm:wedderburnsplitmetacyclic}, which provides a combinatorial description for the Wedderburn decomposition of $\mathbb{Q}G$. Our result formulates the computation of the Wedderburn decomposition of a split metacyclic $p$-group $G$ solely based on the numerical values of $n, m, r ~ \text{and} ~ s$.
	\begin{theorem}\label{thm:wedderburnsplitmetacyclic}
		Let $p$ be an odd prime and let $\zeta_d$ be a primitive $d$-th root of unity. Consider a finite non-abelian split metacyclic $p$-group $G = \langle a, b\mid a^{p^n} = b^{p^m} = 1, bab^{-1} = a^r \rangle$, where $n\geq 2, m \geq 1$, $(r, p^n) = 1$ and $r$ has multiplicative order $p^s$ modulo $p^n$ such that $1\leq s \leq \min \{n-1, m\}$. Then we have the following.
		\begin{enumerate}
			\item {\bf Case ($n-s \geq m$).} In this case,
			$$\mathbb{Q}G \cong \mathbb{Q} \bigoplus_{\lambda=1}^m (p^\lambda+p^{\lambda-1})\mathbb{Q}(\zeta_{p^\lambda}) \bigoplus_{\lambda=m+1}^{n-s}p^m \mathbb{Q}(\zeta_{p^\lambda}) \bigoplus_{t=1}^s p^{m-t}M_{p^t}(\mathbb{Q}(\zeta_{p^{n-s}})).$$
			\item {\bf Case ($n-s < m$).} Suppose $m = (n-s)+k$. Then we have following two sub-cases.
			\begin{enumerate}
				\item {\bf Sub-case ($k \leq s$).} In this sub-case, 
				\begin{align*}
					\mathbb{Q}G \cong & \mathbb{Q} \bigoplus_{\lambda=1}^{n-s} (p^\lambda+p^{\lambda-1})\mathbb{Q}(\zeta_{p^\lambda}) \bigoplus_{\lambda=n-s+1}^{m}p^{n-s} \mathbb{Q}(\zeta_{p^\lambda}) \bigoplus_{t=1}^{k-1} p^{n-s}M_{p^t}(\mathbb{Q}(\zeta_{p^{n-s}}))\\ &\bigoplus_{t=1}^{k-1}\bigoplus_{\lambda=n-s+1}^{m-t} (p^{n-s}-p^{n-s-1})M_{p^t}(\mathbb{Q}(\zeta_{p^{\lambda}})) \bigoplus_{t=k}^s p^{m-t}M_{p^t}(\mathbb{Q}(\zeta_{p^{n-s}})).
				\end{align*}
				\item {\bf Sub-case ($k > s$).} In this sub-case, 
				\begin{align*}
					\mathbb{Q}G \cong & \mathbb{Q} \bigoplus_{\lambda=1}^{n-s} (p^\lambda+p^{\lambda-1})\mathbb{Q}(\zeta_{p^\lambda}) \bigoplus_{\lambda=n-s+1}^{m}p^{n-s} \mathbb{Q}(\zeta_{p^\lambda}) \bigoplus_{t=1}^s p^{n-s}M_{p^t}(\mathbb{Q}(\zeta_{p^{n-s}}))\\ &\bigoplus_{t=1}^s\bigoplus_{\lambda=n-s+1}^{m-t} (p^{n-s}-p^{n-s-1})M_{p^t}(\mathbb{Q}(\zeta_{p^{\lambda}})).
				\end{align*}
			\end{enumerate}
		\end{enumerate}
	\end{theorem}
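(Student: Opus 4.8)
The plan is to realize every simple component of $\mathbb{Q}G$ through the theory of strong Shoda pairs and then to enumerate and count these pairs explicitly from the numerical data $n,m,r,s$. First I would record that $G$ is metabelian (indeed $\langle a\rangle\trianglelefteq G$ with $G/\langle a\rangle$ cyclic), hence abelian-by-supersolvable and therefore strongly monomial, so that every primitive central idempotent of $\mathbb{Q}G$ is realized by a strong Shoda pair $(H,K)$ (see \cite{ODRS04}) and
\[
\mathbb{Q}G\, e(G,H,K)\cong M_{[G:N]}\bigl(\mathbb{Q}(\zeta_{[H:K]}) * N/H\bigr),\qquad N=N_G(K),
\]
the crossed product being a cyclotomic algebra with center $\mathbb{Q}(\zeta_{[H:K]})^{N/H}$. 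Because $p$ is odd, Roquette's theorem guarantees that every irreducible character of the $p$-group $G$ has Schur index $1$ over $\mathbb{Q}$; hence each crossed product collapses to a full matrix ring over its center and
\[
\mathbb{Q}G\, e(G,H,K)\cong M_{[G:H]}\bigl(\mathbb{Q}(\zeta_{[H:K]})^{N/H}\bigr).
\]
Thus the whole problem reduces to listing the strong Shoda pairs up to equivalence and, for each, reading off the matrix size $[G:H]$ and the center $\mathbb{Q}(\zeta_{[H:K]})^{N/H}$.

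Next I would dispose of the linear (degree-one) components, which are exactly those coming from pairs $(G,K)$ with $G/K$ cyclic and which together reassemble $\mathbb{Q}(G/G')$. A short computation with $[b,a]=a^{r-1}$ shows, since the order $p^s$ of $r$ modulo $p^n$ forces $v_p(r-1)=n-s$, that $G'=\langle a^{p^{n-s}}\rangle$ has order $p^s$ and $G/G'\cong C_{p^{n-s}}\times C_{p^m}$. Decomposing $\mathbb{Q}[C_{p^{n-s}}\times C_{p^m}]$ by grouping its characters according to their order $p^\lambda$ and dividing the number of characters of each order by $\phi(p^\lambda)$ yields the summand $\mathbb{Q}$ together with the cyclotomic terms: the multiplicity $p^\lambda+p^{\lambda-1}$ occurs while $\lambda\le\min\{n-s,m\}$ and the multiplicity $p^{\min\{n-s,m\}}$ occurs beyond it. This is already the origin of the split between the cases $n-s\ge m$ and $n-s<m$.

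For the non-linear components I would run Clifford theory over the normal cyclic subgroup $A=\langle a\rangle$. A character $\chi_j\colon a\mapsto\zeta_{p^n}^j$ of order $p^{n'}$ has $G$-orbit of size equal to the order of $r$ modulo $p^{n'}$, which $v_p(r-1)=n-s$ makes equal to $p^t$ with $t=\max\{0,n'-(n-s)\}$; its inertia group is then $H=\langle a,b^{p^t}\rangle$, of index $p^t$ in $G$. Each $\chi_j$ of order $p^{n'}=p^{(n-s)+t}$ (so $1\le t\le s$) extends to $H$ in $p^{m-t}$ ways, one for each value $\eta=\tilde\chi(b^{p^t})$ with $\eta^{p^{m-t}}=1$, and each extension $\tilde\chi$ furnishes a strong Shoda pair with $K=\ker\tilde\chi$, matrix size $[G:H]=p^t$, and $[H:K]=p^{\max\{n',\,\mathrm{ord}(\eta)\}}$. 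The relevant Galois action is $\zeta\mapsto\zeta^r$ on the part of $H/K$ coming from $a$ and trivial on the part coming from $b^{p^t}$; when $\eta$ has small order the center is the fixed field $\mathbb{Q}(\zeta_{p^{n'}})^{\langle\sigma_r\rangle}=\mathbb{Q}(\zeta_{p^{n-s}})$ (since $\sigma_r$ has order $p^t$ and fixes $\zeta_{p^{n-s}}$), giving the components $M_{p^t}(\mathbb{Q}(\zeta_{p^{n-s}}))$, whereas when $\eta$ has large order—possible only when $m$ is large relative to $n-s$—the central character forces a larger cyclotomic field $\mathbb{Q}(\zeta_{p^\lambda})$ with $\lambda>n-s$, giving the components $M_{p^t}(\mathbb{Q}(\zeta_{p^\lambda}))$. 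The thresholds at which this transition occurs, as $t$ runs over $1,\dots,s$, are precisely what separate the sub-cases $k\le s$ and $k>s$, where $m=(n-s)+k$.

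The main obstacle, and the bulk of the work, is the precise bookkeeping in this last step: sorting the $p^{m-t}$ extensions of each orbit representative by $\mathrm{ord}(\eta)$, identifying $N_G(K)$ and hence the exact fixed field in each regime, collapsing the Galois-conjugate characters into single components (dividing character counts by $\phi(p^\lambda)$), and assembling the resulting multiplicities $p^{m-t}$, $p^{n-s}$ and $p^{n-s}-p^{n-s-1}$ in each case and sub-case. I would verify the final tallies against the two global constraints $\sum_i\dim_{\mathbb{Q}}\bigl(\mathbb{Q}Ge_i\bigr)=|G|=p^{n+m}$ and $\sum_{t}\#\{\chi:\chi(1)=p^t\}\,p^{2t}=p^{n+m}$, which together give a decisive consistency check on the combinatorics; indeed the orbit count $\phi(p^{n'})/p^t=p^{n-s}-p^{n-s-1}$ and the telescoping identity $\sum_{\lambda=n-s+1}^{M}\phi(p^\lambda)=p^{M}-p^{n-s}$ are exactly the identities that make the stated multiplicities add up correctly.
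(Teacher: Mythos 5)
Your proposal is correct in substance and would deliver the theorem, but it travels by a genuinely different route from the paper. The paper never invokes Shoda pairs: it constructs the complex irreducible representations explicitly by the Wigner--Mackey method over $N=\langle a\rangle$, computes the character values $\psi(a^ib^j)$ in closed form to read off $\mathbb{Q}(\psi)$, counts Galois conjugacy classes of each degree and field (Remarks \ref{remark:linear_metacyclic} and \ref{remark:galoisclasses_splitmeta}), and then uses $m_{\mathbb{Q}}(\psi)=1$ together with Reiner's and Yamada's results to identify each simple component as $M_{\psi(1)}(\mathbb{Q}(\psi))$. You package the same Clifford-theoretic data (orbit of $\chi_j$, inertia group $\langle a,b^{p^t}\rangle$, extension determined by $\eta$) into strong Shoda pairs $(H,K)$ and read off matrix size and center from the formula of Olivieri, del R\'{\i}o and Sim\'on \cite{ODRS04} plus Roquette's theorem; this spares you from writing down any representations or character values, at the cost of verifying the strong Shoda pair axioms (here $H/K$ is indeed self-centralizing in $G/K$, so this works) and of tracking $N_G(K)$. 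Your numerical inputs all check out: $v_p(r-1)=n-s$, $G/G'\cong C_{p^{n-s}}\times C_{p^m}$, orbit sizes $p^{t}$ with $\phi(p^{n-s})$ orbits for each $t$, $p^{m-t}$ extensions per orbit, and the resulting fields and multiplicities match the theorem.

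Two points deserve care when you write this up. First, your description of the center as the fixed field of $\sigma_r$ of order $p^t$ acting on $\mathbb{Q}(\zeta_{p^{n'}})$ tacitly assumes $N_G(K)=G$; when $\mathrm{ord}(\eta)=p^{\mu}$ with $\mu>n-s$ one finds $b\notin N_G(K)$, so the normalizer shrinks and the center $\mathbb{Q}(\zeta_{p^{\max\{n-s,\mu\}}})$ arises by a slightly different mechanism (smaller $N/H$ acting on a larger cyclotomic field). You flag ``identifying $N_G(K)$'' as remaining work, and the resulting centers do agree with the paper's $\mathbb{Q}(\psi)$, but this is the one place where the naive fixed-field picture would mislead. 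Second, the proposal stops short of the actual case analysis: sorting the $p^{m-t}$ values of $\eta$ by order against the thresholds $n-s$ versus $m-t$, as $t$ runs from $1$ to $s$, is precisely where the case split and the multiplicities $p^{m-t}$, $p^{n-s}$ and $p^{n-s}-p^{n-s-1}$ come from, and it still has to be carried out; the identities you cite (orbit count $\phi(p^{n-s})$, Galois class size $\phi(p^{\lambda})$, and the dimension count $\sum_i\dim_{\mathbb{Q}}\mathbb{Q}Ge_i=p^{n+m}$) are the correct ones and do close the argument.
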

	Several researchers have investigated complex irreducible representations and characters of metacyclic groups (see \cite{BGBasmaji, HK, HS, Munkholm}). In this article, we explicitly compute all inequivalent complex irreducible representations and characters of $G$ (defined in \eqref{prest:splitmetacyclic}) using the Wigner-Mackey method of little groups. In Section \ref{sec:rationalreprepabelian}, we give a brief review of irreducible rational representations of $C_{p^n} \times C_{p^m}$, which is required for the upcoming sections. Section \ref{sec:WMmethod} provides a concise introduction to the Wigner-Mackey method of little groups, while Section \ref{sec:complexrep} delves into the irreducible complex representations of $G$. Section \ref{sec:rationalrep} covers the results related to irreducible rational representations of $G$ which are crucial to prove Theorem \ref{thm:wedderburnsplitmetacyclic} in Section \ref{sec:mainresult}. In Theorem \ref{thm:rationalrep}, we give a combinatorial formula to count irreducible rational representations of $G$ of distinct degrees. 
	
	{\bf Notations.} We conclude this section by setting some notations, which are mostly standard. Throughout this paper, $p$ denotes an odd prime. For a finite group $G$, $Z(G)$, $G'$, and $\mathbb{Q}G$ represent its center subgroup, commutator subgroup, and the rational group algebra of $G$. Additionally, we employ $\Irr(G)$, $\lin(G)$, $\nl(G)$, and $\cd(G)$ to denote the sets of irreducible complex characters, linear complex characters, non-linear irreducible complex characters, and degrees of irreducible complex representations of G, respectively. For $\chi \in \Irr(G)$, we use $m_{\mathbb{Q}}(\chi)$ for the Schur index of $\chi$ over $\mathbb{Q}$, $\Omega(\chi) = m_{\mathbb{Q}}(\chi)\sum_{\sigma \in \gal(\mathbb{Q}(\chi) / \mathbb{Q})}\chi^{\sigma}$, and $\mathbb{Q}(\chi)$ for the field obtained by adjoining the values $\{\chi(g) : g\in G\}$ to $\mathbb{Q}$. The notations $M_{q}(D)$, $Z(B)$ and $\phi(q)$ signify a full matrix ring of order $q$ over the skewfield $D$, the center of an algebraic structure $B$ and  the Euler phi function, respectively.

	\section{Review of rational representations of $C_{p^n} \times C_{p^m}$}\label{sec:rationalreprepabelian}
	In this section, we derive a combinatorial description for counting irreducible rational representations of $C_{p^n} \times C_{p^m}$. Let $G = \langle a, b \mid  a^{p^n} = b^{p^m} = 1, ab = ba\rangle \cong C_{p^n} \times C_{p^m}$, where $n \geq m$. Then 
	$$\Irr(G) = \{\chi_{i,j} \mid \chi_{i,j}(a) = \zeta^i, \chi_{i,j}(b) = \omega^j, 0 \leq i \leq p^n-1, 0 \leq j \leq p^m-1\},$$
	where $\zeta$ is a primitive $p^n$-th root of unity and $\omega$ is a primitive $p^m$-th root of unity. In this section, $G$ always denotes the abelian group defined above. Now, we define an equivalence relation on $\Irr(G)$ by Galois conjugacy over $\mathbb{Q}$. We say that $\chi_{i,j}$ and $\chi_{i',j'}$ are Galois conjugates over $\mathbb{Q}$ if $\mathbb{Q}(\chi_{i,j}) = \mathbb{Q}(\chi_{i',j'})$ and there exists $\sigma \in$ Gal($\mathbb{Q}(\chi_{i,j}) / \mathbb{Q})$ such that $\chi_{i,j}^\sigma = \chi_{i',j'}$.
	\begin{lemma}\cite[Lemma 9.17]{I}\label{SC}
		Let $E(\chi)$ denotes the Galois conjugacy class over $\mathbb{Q}$ of a complex irreducible character $\chi$. Then 
		\[|E(\chi)| = [\mathbb{Q}(\chi) : \mathbb{Q}]. \]
	\end{lemma}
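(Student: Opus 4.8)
The plan is to realize $E(\chi)$ as an orbit under a group action and then apply the orbit--stabilizer theorem together with the fundamental theorem of Galois theory. Let $e$ be the exponent of $G$ and put $\zeta = \zeta_e$, so that every character value $\chi(g)$ lies in the cyclotomic field $\mathbb{Q}(\zeta)$, since $\chi(g)$ is a sum of $|g|$-th roots of unity. The extension $\mathbb{Q}(\zeta)/\mathbb{Q}$ is abelian with Galois group $\Gamma := \gal(\mathbb{Q}(\zeta)/\mathbb{Q}) \cong (\mathbb{Z}/e\mathbb{Z})^{\times}$, and for $\sigma \in \Gamma$ one sets $\chi^{\sigma}(g) := \sigma(\chi(g))$; because $\sigma$ permutes $e$-th roots of unity, $\chi^{\sigma}$ is again the character of a (Galois-twisted) representation, hence $\chi^{\sigma} \in \Irr(G)$. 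This gives an action of $\Gamma$ on $\Irr(G)$ whose orbit through $\chi$ is exactly $E(\chi)$.

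Next I would identify the stabilizer. The claim is that $\mathrm{Stab}_{\Gamma}(\chi) = \gal(\mathbb{Q}(\zeta)/\mathbb{Q}(\chi))$: indeed $\sigma$ fixes $\chi$ if and only if $\sigma(\chi(g)) = \chi(g)$ for every $g \in G$, which happens precisely when $\sigma$ fixes the subfield $\mathbb{Q}(\chi) = \mathbb{Q}\bigl(\chi(g) : g \in G\bigr)$ pointwise. The orbit--stabilizer theorem then gives $|E(\chi)| = [\Gamma : \gal(\mathbb{Q}(\zeta)/\mathbb{Q}(\chi))]$. Finally, since $\mathbb{Q}(\chi)$ is an intermediate field of the Galois extension $\mathbb{Q}(\zeta)/\mathbb{Q}$, the fundamental theorem of Galois theory yields $[\Gamma : \gal(\mathbb{Q}(\zeta)/\mathbb{Q}(\chi))] = [\mathbb{Q}(\chi) : \mathbb{Q}]$, which is the asserted equality.

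There is no genuine obstacle here; the argument is entirely formal once the set-up is fixed. The only points requiring care are checking that $\chi \mapsto \chi^{\sigma}$ maps $\Irr(G)$ into itself, so that $E(\chi)$ is a bona fide $\Gamma$-orbit, and reconciling the definition of Galois conjugacy used in the paper (phrased via $\gal(\mathbb{Q}(\chi)/\mathbb{Q})$ together with an explicit equality of character fields) with the $\Gamma$-orbit relation above. Both of these follow from surjectivity of the restriction homomorphism $\Gamma \twoheadrightarrow \gal(\mathbb{Q}(\chi)/\mathbb{Q})$, which is itself part of the fundamental theorem of Galois theory applied to the abelian extension $\mathbb{Q}(\zeta)/\mathbb{Q}$: every automorphism of $\mathbb{Q}(\chi)/\mathbb{Q}$ lifts to $\Gamma$, and two lifts differ by an element of $\gal(\mathbb{Q}(\zeta)/\mathbb{Q}(\chi))$, which already stabilizes $\chi$.
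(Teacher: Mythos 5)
Your argument is correct and is the standard proof of this fact: the paper offers no proof of its own, citing Isaacs~\cite[Lemma 9.17]{I}, and the argument there is essentially the orbit--stabilizer computation you describe, with the stabilizer of $\chi$ in $\gal(\mathbb{Q}(\zeta_e)/\mathbb{Q})$ identified as $\gal(\mathbb{Q}(\zeta_e)/\mathbb{Q}(\chi))$. The only step you rightly flag as needing care --- that $\chi^{\sigma}\in\Irr(G)$ --- is handled in the standard way (e.g.\ via $[\chi^{\sigma},\chi^{\sigma}]=\sigma([\chi,\chi])=1$ and $\chi^{\sigma}(1)=\chi(1)>0$), so there is no gap.
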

\noindent Now under the above set-up, we have the following remarks.

\begin{remark}\label{remark:rationalcounting_abelian}
\begin{enumerate}
\item 
	Suppose $E(\chi_{i, j})$ denotes the Galois conjugacy class of $\chi_{i, j}$ over $\mathbb{Q}$. Then $E(\chi_{0,0}) = \{\chi_{0,0}\}$ is the only Galois conjugacy class of size $1$.
	
\item {\bf Claim 1:} For each $r$ ($1 \leq r \leq m$), there are $2\sum_{k=0}^{r-1}\phi(p^k) + \phi(p^r)$ distinct Galois conjugacy classes of size $\phi(p^r)$.\\
{\bf Proof of Claim 1.}
Note that for a fixed $r$ and $0 \leq k \leq r$, we have $\mathbb{Q}(\chi_{p^{n-k}, p^{m-r}})= \mathbb{Q}(\chi_{p^{n-r}, p^{m-k}})=\mathbb{Q}(\zeta^{p^{n-r}})=\mathbb{Q}(\omega^{p^{m-r}})= \mathbb{Q}(\theta),$ where $\theta$ is a primitive $p^r$-th root of unity. Therefore,
	$$[\mathbb{Q}(\chi_{p^{n-k}, p^{m-r}}) : \mathbb{Q}] = [\mathbb{Q}(\chi_{p^{n-r}, p^{m-k}}) : \mathbb{Q}] = [\mathbb{Q}(\theta) : \mathbb{Q}] = \phi(p^r).$$
% Moreover, for $0 \leq k < r$, there are $2\phi(p^k)$ equivalence Galois conjugacy classes which are explicitly given by
%	$E(\chi_{lp^{n-k}, p^{m-r}})$ and $ E(\chi_{p^{n-r}, lp^{m-k}}),$
%	where $1\leq l< p^k$ and $(l, p^k) = 1$. 
{\bf Case ($0 \leq k < r$).} In this case for each $k$ with $1\leq l <p^k$ and $(l,p)=1$, we have
	$$\gal(\mathbb{Q}(\chi_{lp^{n-k}, p^{m-r}}) / \mathbb{Q}) = \gal(\mathbb{Q}(\theta) / \mathbb{Q}) = \{\sigma : \sigma(\theta) = \theta^\alpha, 1 \leq \alpha < p^r, (\alpha, p^r) = 1\}.$$
Hence  $\chi_{lp^{n-k}, p^{m-r}}^\sigma(a)= (\zeta^{lp^{n-k}})^\sigma = \zeta^{\alpha lp^{n-k}}$ and $\chi_{lp^{n-k}, p^{m-r}}^\sigma(b)= (\omega^{p^{m-r}})^\sigma = \omega^{\alpha p^{m-r}}$. This implies that $\chi_{lp^{n-k}, p^{m-r}}^\sigma = \chi_{\alpha lp^{n-k}, \alpha p^{m-r}}$ for some $1\leq \alpha < p^r, (\alpha, p^r) = 1$. Thus, the Galois conjugacy class for the representative $\chi_{lp^{n-k}, p^{m-r}}$ is given by
	$$E(\chi_{lp^{n-k}, p^{m-r}}) = \{\chi_{\alpha lp^{n-k}, \alpha p^{m-r}} : 1\leq \alpha < p^r, (\alpha, p^r) = 1\}.$$
Further, let $l \neq l'$ where, $1 \leq l, l' < p^k$, $(l, p) = 1$ and $(l', p)=1$. Suppose there exits $\sigma \in \gal(\mathbb{Q}(\chi_{lp^{n-k}, p^{m-r}}) / \mathbb{Q})$ such that $\chi_{lp^{n-k}, p^{m-r}}^\sigma = \chi_{\alpha lp^{n-k}, \alpha p^{m-r}} = \chi_{l'p^{n-k}, p^{m-r}}$. Then $\sigma$ is identity map and so $\chi_{lp^{n-k}, p^{m-r}} = \chi_{l'p^{n-k}, p^{m-r}}$. Hence, $l = l'$, which is a contradiction. Therefore, there are $\phi(p^k)$ many distinct Galois conjugacy classes with representatives $\chi_{lp^{n-k}, p^{m-r}}$ (for $1\leq l< p^k$ and $(l, p) = 1$). By a similar discussion, again we get $\phi(p^k)$ many distinct Galois conjugacy classes with representatives $\chi_{p^{n-r}, lp^{m-k}}$ (for $1\leq l< p^k$ and $(l, p) = 1$).
Therefore, for $0 \leq k < r$, there are $2\phi(p^k)$ many Galois conjugacy classes which are explicitly given by
	$E(\chi_{lp^{n-k}, p^{m-r}})$ and $ E(\chi_{p^{n-r}, lp^{m-k}}),$
	where $1\leq l< p^k$ and $(l, p) = 1$.	\\
\noindent {\bf Case ($k=r$)}. In this case, there are $\phi(p^r)$ Galois conjugacy classes of size $\phi(p^r)$ and which are given by $E(\chi_{lp^{n-r}, p^{m-r}}),$
	where $1\leq l< p^r$ and $(l, p) = 1$.\\
	Therefore by the above discussion, for a fixed $r$ ($1 \leq r \leq m$), we get $2\sum_{k=0}^{r-1}\phi(p^k) + \phi(p^r)$ distinct Galois conjugacy classes of size $\phi(p^r)$. This completes the proof of Claim 1.
\item	{\bf Claim 2:} For each $r$ ($m < r \leq n$), there are $\sum_{k=0}^{m}\phi(p^k)$ distinct Galois conjugacy classes of size $\phi(p^r)$. \\
{\bf Proof of Claim 2.} By using the similar argument used in Remark \ref{remark:rationalcounting_abelian} (2), for a fixed $k$ ($0 \leq k \leq m$), we get $\phi(p^k)$ distinct Galois conjugacy classes, which are explicitly given by
	$E(\chi_{p^{n-r}, lp^{m-k}})$,
	where $1\leq l< p^k$ and $(l, p) = 1$. This completes the proof of Claim 2.
	\end{enumerate}
\end{remark}	
Now, Lemma \ref{lemma:counting} shows that the Galois conjugacy classes discussed in Remark \ref{remark:rationalcounting_abelian} exhaust all the irreducible complex characters of $G$.
	\begin{lemma}\label{lemma:counting}
		Let $n \geq m$ then
		\begin{equation}\label{eqn:sumformula}
		p^{n+m} = 1+ \sum_{r=1}^{m} \big[\phi(p^r)\cdot \big(2\sum_{k=0}^{r-1}\phi(p^k) + \phi(p^r)\big)\big] + \sum_{r=m+1}^{n}\big[\phi(p^r)\cdot \big(\sum_{k=0}^{m}\phi(p^k)\big)\big].
		\end{equation}		
	\end{lemma}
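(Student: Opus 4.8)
The plan is to prove the identity purely combinatorially by interpreting both sides as counts of irreducible complex characters of $G = C_{p^n} \times C_{p^m}$. The left-hand side $p^{n+m} = |\Irr(G)|$ is the total number of irreducible complex characters, since $G$ is abelian of order $p^{n+m}$. The strategy is to partition $\Irr(G)$ into Galois conjugacy classes over $\mathbb{Q}$ and count, observing that (by Lemma \ref{SC}) a class $E(\chi)$ has size $[\mathbb{Q}(\chi):\mathbb{Q}]$, which for a character of $G$ is always $\phi(p^r)$ for some $r$ with $0 \leq r \leq n$ (because $\mathbb{Q}(\chi)$ is a subfield of $\mathbb{Q}(\zeta_{p^n})$, hence equals $\mathbb{Q}(\zeta_{p^r})$ for a unique $r$). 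Summing $|E(\chi)|$ over a set of representatives of all Galois conjugacy classes recovers $|\Irr(G)| = p^{n+m}$.

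The key steps, in order, are as follows. First, I would record that there is exactly one class of size $1 = \phi(p^0)$, namely $E(\chi_{0,0})$, contributing the leading $1$ on the right-hand side; this is Remark \ref{remark:rationalcounting_abelian}(1). Second, for each $r$ with $1 \leq r \leq m$, Claim 1 of Remark \ref{remark:rationalcounting_abelian} gives exactly $2\sum_{k=0}^{r-1}\phi(p^k) + \phi(p^r)$ distinct Galois conjugacy classes, each of size $\phi(p^r)$; these contribute $\phi(p^r)\cdot\big(2\sum_{k=0}^{r-1}\phi(p^k) + \phi(p^r)\big)$ characters, matching the first sum on the right. Third, for each $r$ with $m < r \leq n$, Claim 2 gives exactly $\sum_{k=0}^{m}\phi(p^k)$ distinct classes of size $\phi(p^r)$, contributing $\phi(p^r)\cdot\sum_{k=0}^{m}\phi(p^k)$ characters, matching the second sum. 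Adding these contributions and invoking that these classes are pairwise disjoint and that $\Irr(G)$ is their disjoint union yields the identity.

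The main obstacle — and the one genuine point that needs care — is verifying exhaustiveness: that every $\chi_{i,j} \in \Irr(G)$ lies in one of the classes enumerated in Remark \ref{remark:rationalcounting_abelian}, with no character counted twice. The disjointness is immediate since distinct Galois conjugacy classes are disjoint by definition of the equivalence relation. For exhaustiveness, I would argue that a character $\chi_{i,j}$ with $\mathbb{Q}(\chi_{i,j}) = \mathbb{Q}(\zeta_{p^r})$ must satisfy $\max\{\,$the $p$-adic valuation conditions on $i$ and $j\,\}$ forcing exactly one of: $p^{n-r} \parallel \gcd$-type data on the $a$-coordinate with the $b$-coordinate of strictly smaller or equal $p$-power order, or symmetrically on the $b$-coordinate; a short case analysis on $\min\{v_p(i), v_p(j)\}$ (with the conventions $v_p(0) = \infty$) shows the representative can be normalized to exactly one of the forms $\chi_{lp^{n-k}, p^{m-r}}$, $\chi_{p^{n-r}, lp^{m-k}}$, or $\chi_{lp^{n-r}, p^{m-r}}$ appearing in the Remark. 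Alternatively — and this is the cleaner route to present — one can sidestep exhaustiveness entirely by a direct computation: show algebraically that the right-hand side of \eqref{eqn:sumformula} equals $p^{n+m}$ using $\sum_{k=0}^{j}\phi(p^k) = p^j$ and $\phi(p^r) = p^r - p^{r-1}$, so that $\sum_{r=1}^{m}\phi(p^r)\big(2p^{r-1} + \phi(p^r)\big) = \sum_{r=1}^m (p^{2r} - p^{2r-2}) + \text{telescoping terms}$ collapses to $p^{2m} - 1$ and $\sum_{r=m+1}^{n}\phi(p^r)p^m = p^m(p^n - p^m) = p^{n+m} - p^{2m}$, whence the total is $1 + (p^{2m}-1) + (p^{n+m} - p^{2m}) = p^{n+m}$. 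I would present this algebraic verification as the formal proof, since it is self-contained and short, and remark that it confirms the enumeration in Remark \ref{remark:rationalcounting_abelian} is complete.
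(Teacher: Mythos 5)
Your proof is correct and takes the same route as the paper: a direct algebraic verification that the right-hand side equals $p^{n+m}$, though your use of $\sum_{k=0}^{r-1}\phi(p^k)=p^{r-1}$ and the telescoping $\sum_{r=1}^{m}(p^{2r}-p^{2r-2})=p^{2m}-1$ is considerably cleaner than the paper's term-by-term expansion with geometric series. You are also right to present the algebraic computation rather than the combinatorial interpretation as the formal proof, since in the paper this lemma is precisely what certifies that the enumeration in Remark \ref{remark:rationalcounting_abelian} is exhaustive, so an argument invoking that exhaustiveness would be circular.
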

	\begin{proof} The first two term of the R.H.S. of \eqref{eqn:sumformula}
`		\begin{align*}
& 1+ \sum_{r=1}^{m}[\phi(p^r)\cdot(2\sum_{k=0}^{r-1}\phi(p^k) + \phi(p^r))] \\
&= 1
			+ \phi(p) [2\phi(1) + \phi(p)]
			+ \phi(p^2) [2\phi(1) + 2\phi(p) + \phi(p^2)] + \dots + \phi(p^m) [2\phi(1) + 2\phi(p)+ \dots +\phi(p^m)]\\
			& = 1 + [\phi^2(p) + \dots + \phi^2(p^m)]  + 2\phi(1)[\phi(p) + \dots + \phi(p^m)]
			+ 2\phi(p) [\phi(p^2) + \dots + \phi(p^m)]+ \\
			& \dots + 2\phi(p^{m-1})[\phi(p^m)]\\
			& = 1 + \frac{(p-1)(p^{2m}-1)}{p+1} + 2(p^m-1) + 2(p-1)(p^m-p) + 2p(p-1)(p^m - p^2)+ \\
			& \dots + p^{m-2}(p-1)(p^m - p^{m-1})\\
			& = 1 + \frac{(p-1)(p^{2m}-1)}{p+1} + 2(p^m-1) + 2(p-1)[p^m(1 + p + \dots + p^{m-2}) - (p + p^3 + \dots + p^{2m-3})]\\
			& = 1 + \frac{(p-1)(p^{2m}-1)}{p+1} + 2(p^m-1) + 2(p-1)[p^m \frac{p^{m-1}-1}{p-1} - \frac{p(p^{2m-2}-1)}{p^2-1}]\\
			& = 1 + \frac{(p-1)(p^{2m}-1)}{p+1} + 2(p^m-1) + 2(p^{2m-1}-p^m) - 2\frac{p(p^{2m-2}-1)}{p+1}\\
			& = 2p^{2m-1} - 1 + \frac{p^{2m+1}- p^{2m} - 2p^{2m-1}+ p + 1}{p+1}\\
			& = 2p^{2m-1} - 1 - 2p^{2m-1} + p^{2m} + 1\\
			& = p^{2m}.
		\end{align*}
		Now the second term of the R.H.S. of \eqref{eqn:sumformula}
	\begin{align*}
\sum_{r=m+1}^{n}[\phi(p^r)\cdot(\sum_{k=0}^{m}\phi(p^k))] 
= \sum_{r=m+1}^{n}[\phi(p^r)\cdot p^m]
= (p^{n}-p^m) p^m
= p^{n+m}-p^{2m}.
		\end{align*}		
This completes the proof. 		
		\end{proof}
Now, since $G$ is an abelian group, the Schur index $m_{\mathbb{Q}}(\chi_{i, j}) = 1$ (for all $0 \leq i \leq p^n-1, 0 \leq j \leq p^m-1$). Schur's theory implies that there exists a unique irreducible $\mathbb{Q}$-representation $\rho$ of $G$ such that $\chi_{i,j}$ occurs as an irreducible constituent of $\rho \otimes_{\mathbb{Q}}\mathbb{F}$ with multiplicity 1, where $\mathbb{F}$ is a splitting field of $G$. Hence, the distinct Galois conjugacy class gives the distinct rational representation of $G$. We summarize the above discussion in Proposition \ref{prop:Ab}. 
	\begin{proposition}\label{prop:Ab}
		Let $G = \langle a, b \mid  a^{p^n} = b^{p^m} = 1, ab = ba\rangle \cong C_{p^n} \times C_{p^m}$, where $n \geq m$. Then we have the following.
		\begin{enumerate}
			\item $G$ has only one irreducible rational representation of degree 1.
			\item For each $r$ ($1 \leq r \leq m$), $G$  has $2\sum_{k=0}^{r-1}\phi(p^k) + \phi(p^r) = p^r + p^{r-1}$ many inequivalent irreducible rational representations of degree $\phi(p^r)$.
			\item  For each $r$ ($m < r \leq n$), $G$ has  $\sum_{k=0}^{m}\phi(p^k) = p^m$ many inequivalent irreducible rational representations of degree $\phi(p^r)$.
			\end{enumerate}
	\end{proposition}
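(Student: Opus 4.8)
The plan is to identify the irreducible rational representations of $G$ with the Galois conjugacy classes of $\Irr(G)$ over $\mathbb{Q}$, and then simply read off both the count and the degrees from Remark \ref{remark:rationalcounting_abelian} together with Lemma \ref{lemma:counting}. First I would isolate the relevant piece of Schur's theory: since $G$ is abelian, every $\chi \in \Irr(G)$ is linear and $m_{\mathbb{Q}}(\chi) = 1$, so to each Galois conjugacy class $E(\chi)$ there corresponds a unique irreducible $\mathbb{Q}$-representation $\rho_\chi$, whose character is $\Omega(\chi) = \sum_{\sigma \in \gal(\mathbb{Q}(\chi)/\mathbb{Q})} \chi^{\sigma}$, distinct classes yield inequivalent representations, and every irreducible $\mathbb{Q}$-representation of $G$ arises in this way. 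In particular the degree of $\rho_\chi$ equals $m_{\mathbb{Q}}(\chi)\,\chi(1)\,[\mathbb{Q}(\chi):\mathbb{Q}] = [\mathbb{Q}(\chi):\mathbb{Q}]$, which by Lemma \ref{SC} is exactly $|E(\chi)|$. Thus counting inequivalent irreducible rational representations of a prescribed degree reduces to counting Galois conjugacy classes of that size.

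Next I would invoke Lemma \ref{lemma:counting}: the identity \eqref{eqn:sumformula} says precisely that the classes exhibited in Remark \ref{remark:rationalcounting_abelian} — namely $E(\chi_{0,0})$, the $2\sum_{k=0}^{r-1}\phi(p^k)+\phi(p^r)$ classes of size $\phi(p^r)$ for $1\le r\le m$, and the $\sum_{k=0}^{m}\phi(p^k)$ classes of size $\phi(p^r)$ for $m< r\le n$ — together account for $1 + \sum_{r=1}^{m}\phi(p^r)\cdot(\#\text{classes of size }\phi(p^r)) + \sum_{r=m+1}^{n}\phi(p^r)\cdot(\#\text{classes of size }\phi(p^r)) = p^{n+m} = |\Irr(G)|$ characters. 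Hence these classes exhaust $\Irr(G)$; as every $[\mathbb{Q}(\chi_{i,j}):\mathbb{Q}]$ has the form $\phi(p^r)$ for some $r$ (it is $[\mathbb{Q}(\zeta_{p^r}):\mathbb{Q}]$, with $p^r$ the larger of the orders of $\zeta^i$ and $\omega^j$), there are no Galois classes of any other size. Item (1) of the proposition is then part (1) of Remark \ref{remark:rationalcounting_abelian}, item (2) is Claim 1, and item (3) is Claim 2.

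Finally the closed forms are routine telescoping: $\sum_{k=0}^{r-1}\phi(p^k) = 1 + (p-1) + (p^2-p) + \cdots + (p^{r-1}-p^{r-2}) = p^{r-1}$, so $2\sum_{k=0}^{r-1}\phi(p^k)+\phi(p^r) = 2p^{r-1} + (p^r-p^{r-1}) = p^r + p^{r-1}$, and likewise $\sum_{k=0}^{m}\phi(p^k) = p^m$. I do not anticipate a real obstacle here: all the genuine combinatorial content has already been packaged into Remark \ref{remark:rationalcounting_abelian} and Lemma \ref{lemma:counting}, so the only points needing care are the clean statement of the Galois-class/rational-representation dictionary (which hinges on the Schur index being $1$) and the remark that the degree of $\rho_\chi$ matches the size of $E(\chi)$.
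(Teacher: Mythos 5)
Your proposal is correct and follows essentially the same route as the paper: the paper likewise identifies irreducible rational representations with Galois conjugacy classes via the Schur index being $1$ for the abelian group $G$, reads the counts off Remark \ref{remark:rationalcounting_abelian}, uses Lemma \ref{lemma:counting} to confirm exhaustion of $\Irr(G)$, and obtains the closed forms $p^r+p^{r-1}$ and $p^m$ by the same telescoping of $\sum_k \phi(p^k)$. No substantive difference.
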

	\noindent		Hence, from Proposition \ref{prop:Ab}, $G$ has total $1+\sum_{r=1}^{m}(p^r+p^{r-1})+(n-m)p^m=\sum_{k=0}^{m}(n+m+1-2k)\phi(p^k)$ inequivalent irreducible rational representations.

	\section{Wigner-Mackey method}\label{sec:WMmethod}
	 Let $G$ be semi-direct product of a normal abelian subgroup $N$ by a subgroup $H$. Since $G$ acts on $N$ by conjugation, this action induces an action on $\Irr(N)$ as follows:
	\begin{center}
		$\chi\in\Irr(N)$, $g\in G$ and $\forall a\in N$, we have $(g\cdot \chi)(a) = \chi(gag^{-1})$.
	\end{center}
	By the definition of conjugate representations, $g\cdot \chi$ is the same as the conjugate representation $\chi^g$ of $\chi$ by $g\in G$. Let $I_\chi$ be the inertia group of $\chi$ in $G$. Observe that $N\leq I_\chi$. Let $H_\chi := I_\chi\cap H$. Then $I_\chi$ is a semi-direct product of $N$ by $H_\chi$. It can be shown that each $\chi\in \Irr(N)$ can be extended to a homomporphism from $I_\chi$ to $\mathbb{C}^*$, in such a way that this extended homomorphism is the trivial map when restricted to $H_\chi$. So we can regard $\chi$ as a linear representation of $I_\chi$. Further, if $\rho$ is a complex irreducible representation of $H_\chi$ and since $I_{\chi}/N \cong H_{\chi}$, then $\rho$ is an irreducible complex representation of $I_\chi$. Therefore, the representation $\chi\otimes\rho$ is an irreducible complex representation of $I_{\chi}$ and $\deg (\chi\otimes\rho) =\deg \rho$.
	\begin{theorem}\cite[Theorem 4.7.7]{CM}\label{theorem:WMmethod}
		Let $G$ be a finite group which is a semi-direct product of an abelian group $N$ by a subgroup $H$. Let $\mathcal{O}_1, \mathcal{O}_2, \dots, \mathcal{O}_t$ be the distinct orbits under the action of $G$ on $\Irr(N)$ and let $\chi_j$ be a representative of the orbit $\mathcal{O}_j$. Let $I_j$ denote the inertia group of $\chi_j$ and let $H_j := I_j\cap H$. For any irreducible representation $\rho$ of $H_j$, let $\theta_{j, \rho}$ denote the representation of $G$ induced from the irreducible representation $\chi_j\otimes\rho$ of $I_j$. Then, we have
		\begin{enumerate}
			\item $\theta_{j, \rho}$ is irreducible,
			\item If $\theta_{j, \rho}$ and $\theta_{j', \rho'}$ are isomorphic, then $j = j'$ and $\rho$ is isomorphic to $\rho'$,
			\item Every irreducible representation of $G$ is isomorphic to one of the $\theta_{j, \rho}$.		
		\end{enumerate}
	\end{theorem}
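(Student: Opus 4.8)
The plan is to obtain all three assertions from Clifford theory together with Mackey's double-coset formula for the intertwining number of two induced representations; this is the classical ``method of little groups'', so one option is simply to cite it, but here is the route I would follow to reconstruct it. I would begin by fixing an orbit $\mathcal{O}_j$ with chosen representative $\chi_j \in \Irr(N)$ and unwinding the structure around its inertia group $I_j = \mathrm{Stab}_G(\chi_j)$: since $N$ is abelian and normal one has $N \leq I_j$ and a splitting $I_j = N \rtimes H_j$ with $H_j = I_j \cap H$. The first technical point to nail down is that $\chi_j$, being $H_j$-invariant, extends to a \emph{linear} character $\tilde\chi_j$ of $I_j$ by $\tilde\chi_j(nh) = \chi_j(n)$ for $n \in N$, $h \in H_j$; multiplicativity is the one-line verification $\tilde\chi_j\big((n_1h_1)(n_2h_2)\big) = \chi_j\big(n_1(h_1n_2h_1^{-1})\big) = \chi_j(n_1)\chi_j(n_2)$, and $\tilde\chi_j$ is trivial on $H_j$. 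Inflating an irreducible $\rho$ of $H_j \cong I_j/N$ to $I_j$ and twisting by $\tilde\chi_j$ then produces an irreducible representation $\tilde\chi_j\otimes\rho$ of $I_j$ of degree $\deg\rho$ which restricts on $N$ to $(\deg\rho)\chi_j$; set $\theta_{j,\rho} = \mathrm{Ind}_{I_j}^G(\tilde\chi_j\otimes\rho)$.

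The core of the argument is a single computation that yields (1) and (2) simultaneously. Mackey's formula, combined with Frobenius reciprocity, expresses
\[
\langle \theta_{j,\rho},\,\theta_{j',\rho'}\rangle_G \;=\; \sum_{g \in I_j\backslash G/I_{j'}} \big\langle \mathrm{Res}_{\,I_j\cap\, {}^gI_{j'}}(\tilde\chi_j\otimes\rho),\ \mathrm{Res}_{\,I_j\cap\, {}^gI_{j'}}\big({}^g(\tilde\chi_{j'}\otimes\rho')\big)\big\rangle.
\]
Now $N$ is contained in every intersection $I_j\cap {}^gI_{j'}$, so restricting the two arguments further down to $N$ gives a multiple of $\chi_j$ on the left and a multiple of the conjugate character ${}^g\chi_{j'}$ on the right; a double coset therefore contributes $0$ unless ${}^g\chi_{j'} = \chi_j$. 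Because $\chi_j$ and $\chi_{j'}$ are chosen from distinct orbits, this forces $j = j'$ and $g \in I_j$, i.e.\ only the trivial double coset survives, contributing $\langle \tilde\chi_j\otimes\rho,\ \tilde\chi_j\otimes\rho'\rangle_{I_j} = \langle\rho,\rho'\rangle_{H_j}$ (the linear twist cancels, and inflation along $I_j \to H_j$ is an isometry). Hence $\langle \theta_{j,\rho},\theta_{j',\rho'}\rangle_G = \delta_{jj'}\langle\rho,\rho'\rangle_{H_j}$: taking $j=j'$, $\rho=\rho'$ gives irreducibility, which is (1), and reading the general identity for isomorphic irreducibles gives the distinctness statement (2).

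For (3) I would finish with a dimension count, which is legitimate once (1) and (2) are in hand: the $\theta_{j,\rho}$ are pairwise non-isomorphic irreducibles, and
\[
\sum_j\sum_{\rho\in\Irr(H_j)} (\deg\theta_{j,\rho})^2 = \sum_j [G:I_j]^2\,|H_j| = \frac{|G|}{|N|}\sum_j [G:I_j] = \frac{|G|}{|N|}\sum_j |\mathcal{O}_j| = \frac{|G|}{|N|}\,|\Irr(N)| = |G|,
\]
using $|H_j| = |I_j|/|N|$, orbit-stabilizer, and $|\Irr(N)| = |N|$; since the squared degrees of all of $\Irr(G)$ already sum to $|G|$, the $\theta_{j,\rho}$ must exhaust $\Irr(G)$. (Alternatively, and more structurally: Clifford's theorem puts $\mathrm{Res}_N\pi$ on a single orbit $\mathcal{O}_j$ and writes $\pi = \mathrm{Ind}_{I_j}^G\pi_0$ with $\pi_0$ lying over $\chi_j$, and Gallagher's theorem — applicable because $\chi_j$ extends to $\tilde\chi_j$ — identifies the representations over $\chi_j$ with the $\tilde\chi_j\otimes\rho$.) The one step I expect to require genuine care is the vanishing of the off-orbit double-coset terms in the Mackey sum; this is exactly the place where the hypotheses that $N$ is abelian and that $I_j$ splits as $N\rtimes H_j$ are indispensable, everything else being routine bookkeeping.
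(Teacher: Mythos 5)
Your proof is correct. The paper does not prove this statement at all --- it imports it verbatim as Theorem 4.7.7 of Musili's book --- so there is no in-paper argument to compare against; your reconstruction is the standard proof of the method of little groups (extension of the $I_j$-invariant linear character $\chi_j$ to $\tilde\chi_j$ trivial on $H_j$, Mackey's intertwining-number formula to get $\langle\theta_{j,\rho},\theta_{j',\rho'}\rangle=\delta_{jj'}\langle\rho,\rho'\rangle$, and a sum-of-squares count for exhaustion), and every step, including the vanishing of the off-orbit double-coset terms via restriction to $N$, checks out.
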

	
	\section{Complex representations of split metacyclic $p$-groups}\label{sec:complexrep}
	Basmaji \cite{BGBasmaji} obtained complex irreducible representations of metacyclic groups. However, in this section we will first describe complex irreducible representations a finite non-abelian split metacyclic $p$-group $G$ (defined in \eqref{prest:splitmetacyclic}), in detail by using Wigner-Meckey method of little groups. Observe that $G =N \rtimes_{T} H$ for some homomorphism $T : H \rightarrow \aut(N)$, where $N = \langle a \rangle \cong C_{p^n}$ and $H = \langle b \rangle \cong C_{p^m}$. Here,
%	\begin{equation}\label{metacyclicpresen}
%	G = \langle a, b\mid a^{p^n} = b^{p^m} = 1, bab^{-1} = a^r \rangle ,
%	\end{equation}
%	where $r$ has multiplicative order $p^s$ modulo $p^n$ and $s \leq m$. Here,
	$$ \Irr(N)= \{\chi_i : \chi_i(a) = \zeta^i, i = 0, 1, 2, \dots, p^n-1\}, $$
	where $\zeta$ is a primitive $p^n$-th root of unity. Now, $G$ acts on $\Irr(N)$ by conjugation with the subgroup $N$ acting trivially. So, this induces an action of $G/N$ on $\Irr(N)$. As $\chi_k^b(a) = \chi_k(bab^{-1}) = \chi_k(a^r) = \chi_{rk}(a)$, so $\chi_k^b = \chi_{rk}$. Similarly, $\chi_k^{b^2} = \chi_{r^2k}$, $\chi_k^{b^3} = \chi_{r^3k}$ and so on. Also, $\chi_k^{b^{p^s}} = \chi_{r^{p^s}k} = \chi_k$. Thus, no orbit has size greater than $p^s$.
	\begin{lemma}\label{H}
		Let $p$ be an odd prime. Suppose $a$ is an integer co-prime to $p$ and $a$ has multiplicative order $f$ $\mod p$. For each non-zero integer $x$, let $w_p(x) = \max \{l : p^l \mid x\}$. If $f \mid m$, then $w_p(a^m-1) = w_p(a^f-1) + w_p(m)$.
		\begin{proof}
			See \textnormal{\cite[Lemma 8.1]{BH}}.
		\end{proof}
	\end{lemma}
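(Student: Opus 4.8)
The plan is to recognize Lemma~\ref{H} as an instance of the ``lifting the exponent'' phenomenon and to prove it by a short sequence of reductions. First I would exploit the hypothesis on $f$: since $a$ has multiplicative order $f$ modulo $p$, Fermat's little theorem forces $f \mid p-1$, so in particular $p \nmid f$ and hence $w_p(m) = w_p(m/f)$. Setting $b := a^f$, we have $b \equiv 1 \pmod p$ and $w_p(a^f - 1) = w_p(b-1)$, so the assertion becomes the cleaner statement $w_p(b^k - 1) = w_p(b-1) + w_p(k)$ with $k := m/f \in \mathbb{Z}_{>0}$; granting this, the lemma follows at once.

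To prove this, I would write $k = p^c k'$ with $p \nmid k'$ and treat the two factors separately. For the part prime to $p$: from $b^{k'} - 1 = (b-1)(b^{k'-1} + \cdots + b + 1)$ and $b \equiv 1 \pmod p$, the second factor is congruent to $k'$ modulo $p$, hence is a $p$-adic unit, so $w_p(b^{k'} - 1) = w_p(b-1)$. Applying this with $b$ replaced by $b^{p^c}$ reduces everything to showing $w_p(b^{p^c} - 1) = w_p(b-1) + c$, and by an immediate induction on $c$ this in turn reduces to the single fact that $w_p(b^p - 1) = w_p(b-1) + 1$ whenever $b \equiv 1 \pmod p$.

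For that last fact I would write $b = 1 + p^t u$ with $t = w_p(b-1) \ge 1$ and $p \nmid u$, and expand $b^{p-1} + b^{p-2} + \cdots + 1 = \sum_{i=0}^{p-1}(1 + p^t u)^i$ by the binomial theorem: the constant terms sum to $p$; the degree-one terms sum to $p^t u \cdot \tfrac{(p-1)p}{2}$, which has $p$-adic valuation at least $t+1$; and every remaining term is divisible by $p^{2t}$, hence by $p^{t+1}$ since $t \ge 1$. Thus $b^{p-1} + \cdots + 1 \equiv p \pmod{p^{t+1}}$, so its valuation is exactly $1$, and combining with $b^p - 1 = (b-1)(b^{p-1}+\cdots+1)$ gives $w_p(b^p-1) = t+1$. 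The one place where the hypothesis that $p$ is odd enters — and the point one must be careful about — is precisely the claim that $\tfrac{(p-1)p}{2}$ (equivalently $\binom{p}{2}$) is divisible by $p$, which fails for $p=2$; this is the usual reason the lifting-the-exponent lemma takes a different shape at the prime $2$. Assembling the three reductions yields the stated formula; alternatively one may simply invoke \cite[Lemma~8.1]{BH}.
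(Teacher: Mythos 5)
Your argument is correct, and it is a genuinely different route from the paper's, which offers no proof at all beyond the citation to \cite[Lemma 8.1]{BH}. Your chain of reductions is sound: Fermat gives $f \mid p-1$, hence $p \nmid f$ and $w_p(m)=w_p(m/f)$; the substitution $b=a^f$ reduces the lemma to the lifting-the-exponent statement $w_p(b^k-1)=w_p(b-1)+w_p(k)$ for $b\equiv 1 \pmod p$; the factor of $k$ prime to $p$ is absorbed because $b^{k'-1}+\cdots+1\equiv k' \pmod p$ is a unit; and the induction bottoms out in the computation $\sum_{i=0}^{p-1}(1+p^tu)^i \equiv p \pmod{p^{t+1}}$, where you correctly isolate the only place oddness of $p$ enters, namely $p \mid \binom{p}{2}$. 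What your version buys is a self-contained, elementary proof that makes the role of the hypothesis $p\neq 2$ transparent --- which is worth having, since the paper's later applications (Lemma \ref{NT} and the orbit computations of Section \ref{sec:complexrep}) all hinge on exactly this valuation identity. The only caveat, which is a defect of the statement rather than of your proof, is the degenerate case $a^f=1$ (e.g.\ $a=\pm 1$), where both sides involve $w_p(0)$ and the assertion is vacuous; one could note this in passing before setting $b=a^f$.
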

	\begin{lemma}\label{NT}
		Let $p$ be a prime and $k\in \mathbb{N}$ such that $(k, p) = 1$. Then for a fixed natural numbers $s$ and $n$ with $s< n$, the multiplicative order of $(kp^{n-s} + 1)
		$ is $p^s \mod p^n$.
	\end{lemma}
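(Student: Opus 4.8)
The plan is to apply Lemma \ref{H} with the integer $a$ replaced by $kp^{n-s}+1$. First I would verify the hypotheses: since $(k,p)=1$ and $s<n$, we have $kp^{n-s}+1 \equiv 1 \Mod{p}$, so $kp^{n-s}+1$ is co-prime to $p$ and its multiplicative order $f$ modulo $p$ equals $1$; in particular $f \mid p^s$ trivially, so Lemma \ref{H} applies with $m$ taken to be any multiple of $f=1$. Write $q = kp^{n-s}+1$. The multiplicative order of $q$ modulo $p^n$ is the least $e$ with $q^e \equiv 1 \Mod{p^n}$, i.e. the least $e$ with $w_p(q^e-1)\geq n$. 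By Lemma \ref{H}, $w_p(q^e-1) = w_p(q-1) + w_p(e) = w_p(kp^{n-s}) + w_p(e) = (n-s) + w_p(e)$, using $(k,p)=1$. So the order is the least $e$ with $(n-s)+w_p(e)\geq n$, i.e. the least $e$ with $w_p(e)\geq s$; this least value is $e=p^s$.

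The argument is essentially immediate once Lemma \ref{H} is invoked, so there is no serious obstacle; the only point requiring a moment's care is the degenerate nature of the hypothesis $f=1$ in Lemma \ref{H}, which one should check is genuinely allowed (it is: the statement requires only $f \mid m$, and with $f=1$ this holds for every $m$, so the formula $w_p(q^m-1)=w_p(q-1)+w_p(m)$ is available for all $m$). One might alternatively give a direct inductive proof via the binomial expansion $(kp^{n-s}+1)^{p^j} = 1 + p^j\cdot kp^{n-s} + \binom{p^j}{2}k^2p^{2(n-s)} + \cdots$, tracking the $p$-adic valuation of each term and using that $p$ is odd to see the linear term dominates, but invoking Lemma \ref{H} is cleaner and avoids repeating that computation.
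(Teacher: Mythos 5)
Your proposal is correct and takes essentially the same route as the paper, which simply invokes Lemma \ref{H} with $a=kp^{n-s}+1$ and $m=p^s$; you have merely spelled out the details (checking $f=1$, computing $w_p(q^e-1)=(n-s)+w_p(e)$, and identifying the least $e$ with $w_p(e)\geq s$) that the paper leaves implicit. No issues.
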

	\begin{proof}

		Choose $a=kp^{n-s}+1$ and $m=p^s$ in Lemma \ref{H}.
	\end{proof}
	Under the above set-up, we have the following remarks.
\begin{remark}\label{remark:splitmeta}
\begin{enumerate}
\item For some prime $p$ and $n\in \mathbb{N}$, consider $x\in \mathbb{Z}_{p^n}$. Then observe that there exist unique positive integers $k, s$ such that $x=1+kp^{n-s}$, where $k< p^s$, $s\leq n$ and $(k,p^s)=1$. 
	 Hence from lemma \ref{NT}, in the presentation of $G$ (see \eqref{prest:splitmetacyclic}), we have $r = kp^{n-s} + 1$ for some $k$ such that $1\leq k< p^s$ and $(k, p) = 1$. Therefore, by the above discussion, it is easy to check that
		$\{\chi_0\}, \{\chi_{p^s}\}, \{\chi_{2p^s\}, \dots, \{\chi_{(p^{n-s}-1)}p^s}\}$
	are the only orbits of size one under the above defined action of $G$ on $\Irr(N)$. Hence, the total number of orbits of size one is $p^{n-s}$. 	Moreover, the inertia group of $\chi_{\lambda p^s}, 0 \leq \lambda \leq p^{n-s}-1$ is $G$ itself.	Corresponding to the orbit $\mathcal{O}_{\chi_{\lambda p^s}} = \{\chi_{\lambda p^s}\}$ (for $0 \leq \lambda \leq p^{n-s} -1$) of size 1, the irreducible complex representations of $G$ are given by
		\begin{equation}\label{eq:linearcomplexrep}
			\theta_{\chi_{\lambda p^s}, \omega}(a) = \zeta^{\lambda p^s} \quad \text{and} \quad \theta_{\chi_{\lambda p^s}, \omega}(b) = \omega,
		\end{equation}
	where $\zeta$ is any fixed primitive $p^n$-th root of unity and $\omega$ is a $p^m$-th root of unity (by Theorem \ref{theorem:WMmethod}). In this manner, we obtain $p^{n+m-s}=|G/G{}'|$ many degree 1 representations of $G$.
	
\item	\noindent {\bf Claim:}\label{claim1} For a fixed $t \leq s$, we have
		$r^{p^t}p^{s-t} \equiv p^{s-t}(\mod p^n)$, where $r$ is given by \eqref{prest:splitmetacyclic}.\\
	\noindent {\bf Proof of the claim:}	In view of the above discussion, $r=1+kp^{n-s}$ and therefore, by Lemma \ref{NT}, we get 
$(kp^{n-s} + 1)^{p^t} \equiv 1 (\mod p^{n-s+t})$. Hence $r^{p^t}p^{s-t} \equiv p^{s-t}(\mod p^n)$. This completes the proof of the claim.

\noindent Therefore, by the above claim, for  a fixed $t$ ($1 \leq t\leq  s$), the orbit of $\chi_{lp^{s-t}}$ is $$\{\chi_{lp^{s-t}}, \chi_{rlp^{s-t}}, \dots, \chi_{r^{p^t-1}lp^{s-t}} \},$$ 
where $1\leq l < p^{n-s+t}$ with $(l,p)=1$. Hence, for a fixed $t$ ($1 \leq t \leq s$), we can see that, there are 
		$\phi(p^{n-s+t})/p^t = p^{n-s} - p^{n-s-1} = \phi(p^{n-s})$
	orbits of size $p^t$. Hence, the total number of orbits of size greater than $1$ is $s\phi(p^{n-s})$. Moreover, the inertia group of $\chi_{lp^{s-t}}$ is $\langle a, b^{p^t} \rangle$.

\item For a fixed $t$ ($1 \leq t \leq s$), we have $H_{\chi_{lp^{s-t}}}=\langle b^{p^t} \rangle \cong C_{p^{m-t}}$, where $1\leq l < p^{n-s+t}$ with $(l,p)=1$. Therefore, by Theorem \ref{theorem:WMmethod}, there are $p^{m-t}$ irreducible complex representations of $G$ corresponding to each orbit $\mathcal{O}_{\chi_{lp^{s-t}}} = \{\chi_{lp^{s-t}}, \chi_{rlp^{s-t}}, \dots, \chi_{r^{p^t-1}lp^{s-t}} \}$. Since $|G/I_{\chi_{lp^{s-t}}}|=p^t$, they are of degree $p^t$. Moreover, these $p^{m-t} $ irreducible complex representations of $G$ are given explicitly as follows:
	\begin{equation}\label{eq:non-linearcomplexrep}
		\begin{aligned}
			\theta_{\chi_{lp^{s-t}}, \omega}(a) &= \left(
			\begin{array}{ccccc}
				\zeta^{lp^{s-t}} & 0 & 0 & \cdots & 0\\
				0 & \zeta^{rlp^{s-t}} & 0 & \cdots & 0\\
				0 & 0 & \zeta^{r^2lp^{s-t}} & \cdots & 0\\
				\vdots & \vdots & \vdots & \ddots \\
				0 & 0 & 0 & \cdots & \zeta^{r^{p^t-1}lp^{s-t}}
			\end{array}\right) \quad \text{and}\\
			  \theta_{\chi_{lp^{s-t}}, \omega}(b) &= \left(
			\begin{array}{ccccc}
				0 & 1 & 0 & \cdots & 0\\
				0 & 0 & 1 & \cdots & 0\\
				\vdots & \vdots & \vdots & \ddots\\
				0 & 0 & 0 & \cdots & 1\\
				\omega & 0 & 0 & \cdots & 0
			\end{array}\right),
		\end{aligned}
	\end{equation}
	where $\zeta$ is any fixed primitive $p^n$-th root of unity, $\omega$ is a $p^{m-t}$-th root of unity.\\
	\end{enumerate}
	\end{remark}
	We summarize the above discussion in Proposition \ref{prop:complexrep}.
	\begin{proposition}\label{prop:complexrep}
		Let $p$ be an odd prime. Consider the group $G$ defined in \eqref{prest:splitmetacyclic}. Then, we have the following.
		\begin{enumerate}
			\item $G$ has $p^{n+m-s}$ many complex representations of degree $1$, and they are explicitly given in \eqref{eq:linearcomplexrep}.
			\item For a given $t$ ($1 \leq t \leq s$), $G$ has $\phi(p^{n-s})p^{m-t}$ many inequivalent irreducible complex representations of degree $p^t$ and they are explicitly given in \eqref{eq:non-linearcomplexrep}.
			\item $G$ has total $p^{n+m-s} + p^{n+m-s-1} - p^{n+m-2s-1}$ many inequivalent irreducible complex representations.
		\end{enumerate}
	\end{proposition}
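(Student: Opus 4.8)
The plan is to assemble parts (1) and (2) directly from the orbit analysis of Remark~\ref{remark:splitmeta} together with the Wigner--Mackey Theorem~\ref{theorem:WMmethod}, and then to deduce (3) by a one-line geometric-series computation.

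For (1): by Remark~\ref{remark:splitmeta}(1) the conjugation action of $G$ on $\Irr(N)$ has exactly $p^{n-s}$ orbits of size $1$, namely $\{\chi_{\lambda p^s}\}$ for $0\le\lambda\le p^{n-s}-1$, and the inertia group of $\chi_{\lambda p^s}$ is all of $G$, so its little group is $H_{\chi_{\lambda p^s}}=\langle b\rangle\cong C_{p^m}$, which has exactly $p^m$ irreducible (linear) complex representations. Theorem~\ref{theorem:WMmethod} then attaches to each pair $(\lambda,\omega)$ an irreducible representation of $G$; since $I_{\chi_{\lambda p^s}}=G$ its degree equals that of the corresponding representation of the little group, hence $1$, and distinct pairs give inequivalent representations. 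This produces the $p^{n-s}\cdot p^m=p^{n+m-s}=|G/G'|$ degree-$1$ representations listed in \eqref{eq:linearcomplexrep}. For (2): by Remark~\ref{remark:splitmeta}(2)--(3), for each fixed $t$ with $1\le t\le s$ there are $\phi(p^{n-s})$ orbits of size $p^t$, with representatives $\chi_{lp^{s-t}}$ ($1\le l<p^{n-s+t}$, $(l,p)=1$), and the corresponding little group is $H_{\chi_{lp^{s-t}}}=\langle b^{p^t}\rangle\cong C_{p^{m-t}}$; being cyclic, it has exactly $p^{m-t}$ irreducible representations, all one-dimensional. Hence Theorem~\ref{theorem:WMmethod} attaches to each such orbit $p^{m-t}$ pairwise inequivalent irreducible representations of $G$, each of degree $[G:I_{\chi_{lp^{s-t}}}]\cdot 1=p^t$; these are precisely the representations exhibited in \eqref{eq:non-linearcomplexrep}. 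By part~(3) of Theorem~\ref{theorem:WMmethod}, the representations produced in this way exhaust $\Irr(G)$, which establishes (1) and (2).

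For (3), I would simply total the counts from (1) and (2):
\[
|\Irr(G)|=p^{n+m-s}+\sum_{t=1}^{s}\phi(p^{n-s})\,p^{m-t}
=p^{n+m-s}+p^{n-s-1}(p-1)\cdot p^{m-s}\cdot\frac{p^{s}-1}{p-1},
\]
and the second summand collapses to $p^{n+m-s-1}-p^{n+m-2s-1}$ after cancelling $p-1$, giving the stated value. As a built-in consistency check I would also confirm $\sum_{\chi\in\Irr(G)}\chi(1)^2=p^{n+m-s}+\sum_{t=1}^{s}\phi(p^{n-s})\,p^{m-t}(p^t)^2=|G|=p^{n+m}$, which follows from the same identity.

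There is no real obstacle here: the genuine work---identifying the orbit sizes, their multiplicities, and the structure of the little groups---was already carried out via Lemmas~\ref{H} and~\ref{NT} and Remark~\ref{remark:splitmeta}, and the constraint $s\le\min\{n-1,m\}$ is exactly what keeps all the exponents ($m-t$, $n-s$, $n-s+t$, and so on) nonnegative so that these orbit counts and little groups are well defined. The one point I would make sure to state cleanly before invoking Theorem~\ref{theorem:WMmethod}(3) is that Remark~\ref{remark:splitmeta} has listed \emph{every} orbit of the action on $\Irr(N)$; this is immediate from $\sum_i|\mathcal{O}_i|=p^{n-s}+\sum_{t=1}^{s}\phi(p^{n-s})\,p^{t}=p^{n}=|\Irr(N)|$.
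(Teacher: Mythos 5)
Your proposal is correct and follows essentially the same route as the paper: parts (1) and (2) are read off from the orbit analysis in Remark \ref{remark:splitmeta} combined with Theorem \ref{theorem:WMmethod}, and part (3) is the resulting geometric-series sum (the paper confirms completeness via $\sum_{\chi}\chi(1)^2=|G|$, while you check $\sum_i|\mathcal{O}_i|=|\Irr(N)|$ — equivalent sanity checks). Your explicit simplification of $p^{n+m-s}+\phi(p^{n-s})\sum_{t=1}^{s}p^{m-t}$ to the stated closed form is a small but welcome addition that the paper leaves implicit.
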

	\begin{proof}
		\begin{enumerate}
			\item See Remark \ref{remark:splitmeta} (1).
			\item For a fixed $t$ ($1 \leq t \leq s$), the number of orbits of size $p^t$ is $\phi(p^{n-s})$ and hence by Remark \ref{remark:splitmeta} (2), we get the result.
			\item Since \[|G| = p^{n+m-s} + \phi(p^{n-s})\sum_{t=1}^{s}p^{2t}p^{m-t}\]
			and hence the total number of inequivalent irreducible complex representations is $p^{n+m-s} + \phi(p^{n-s})\sum_{t=1}^{s}p^{m-t}$.
		\end{enumerate}
	\end{proof}
%	\begin{corollary}
%		Let $p$ be an odd prime and consider the group $G = C_{p^n} \rtimes C_{p^m}$ for $n, m \geq 1$. Let the presentation of $G$ is:
%		\[G = \langle a, b\mid a^{p^n} = b^{p^m} = 1, bab^{-1} = a^r \rangle ,\]
%		where $r$ has multiplicative order $p^s$ modulo $p^n$ and $s \leq m$. Then, 
%		\begin{itemize}
%			\item[1.] $G$ has $p^{n+m-s} + p^{n+m-s-1} - p^{n+m-2s-1}$ many distinct conjugacy classes.
%			\item[2.] The commutator subgroup, $G'$ of $G$ is $\langle a^{p^{n-s}} \rangle$.
%			\item[3.] The center, $Z(G)$ of $G$ is $\langle a^{p^s}, b^{p^s} \rangle$.
%		\end{itemize}
%	\end{corollary}

	\section{Rational representations of split metacyclic $p$-groups}\label{sec:rationalrep}
	In this section, we discuss the irreducible rational representations of 
a split metacyclic $p$-groups $G$ (defined in \eqref{prest:splitmetacyclic}). 	
To find all the inequivalent irreducible rational representations of $G$, we divide the set of all inequivalent irreducible $\mathbb{Q}$-representations of $G$ into two parts:
	\begin{enumerate}
		\item the irreducible $\mathbb{Q}$-representations whose kernels contain $G'$ and
		\item the irreducible $\mathbb{Q}$-representations whose kernels do not contain $G'$.
	\end{enumerate}	
%	 The irreducible $\mathbb{Q}$-representations whose kernels contain $G'$ can be obtained by the lifts of the irreducible $\mathbb{Q}$-representations of $G/G'$. Moreover, they are in bijective correspondence with the Galois conjugacy classes of complex representations of degree $1$. 
\noindent Since $G/G' = \langle a^{p^s}, b \rangle \cong C_{p^{n-s}}\times C_{p^m}$, from Proposition \ref{prop:Ab}, we have one of the following two cases for rational irreducible representations of $G$ whose kernels contain $G'$, which is covered in the Remark \ref{remark:linear_metacyclic}.
\begin{remark}\label{remark:linear_metacyclic}	{\bf Case 1 ($n-s \geq m$).} In this case, $G$ has 1 for $\lambda = 0$, $p^{\lambda -1}(p+1)$ for $1 \leq \lambda \leq m$ and $p^m$ for each $\lambda$ with $m < \lambda \leq n-s$ many inequivalent irreducible rational representations of degree $\phi(p^\lambda)$.\\
	{\bf Case 2 ($n-s < m$).} In this case, $G$ has 1 for $\lambda = 0$, $p^{\lambda -1}(p+1)$ for $1 \leq \lambda \leq n-s$ and $p^{n-s}$  for each $\lambda$ with $n-s < \lambda \leq m$ many inequivalent irreducible rational representations of degree $\phi(p^\lambda)$.\\
	Hence, $G$ has $\sum_{k=0}^{\min\{n-s, m\}}(n+m+1-s-2k)\phi(p^k)$ many inequivalent irreducible rational representations whose kernels contain $G'$ (see Proposition \ref{prop:Ab}).
\end{remark}	
   Now, we will compute all those irreducible rational representations of $G$ whose kernels do not contain $G'$.  We have $\cd(G)= \{p^t : 0 \leq t \leq s\}$. For a fixed $t$ ($1 \leq t \leq s$), there are $\phi(p^{n-s})p^{m-t}$ many irreducible complex representations of degree $p^t$ of $G$ (see Proposition \ref{prop:complexrep}), namely, $\theta_{\chi_{lp^{s-t}}, \omega}$ (see \eqref{eq:non-linearcomplexrep}). Let $\psi$ denote the characters corresponding to the representations $\theta_{\chi_{lp^{s-t}}, \omega}$. We have 
   \begin{equation}\label{eq:charofb}
   	\theta_{\chi_{lp^{s-t}}, \omega}(b) = \left(
   	\begin{array}{ccccc}
   		0 & 1 & 0 & \cdots & 0\\
   		0 & 0 & 1 & \cdots & 0\\
   		\vdots & \vdots & \vdots & \ddots\\
   		0 & 0 & 0 & \cdots & 1\\
   		\omega & 0 & 0 & \cdots & 0
   	\end{array}\right),
   \end{equation}
   where $\omega$ is a $p^{m-t}$-th root of unity. Observe that for any non-negative integer power of $\theta_{\chi_{lp^{s-t}}, \omega}(b)$ is either a diagonal matrix or its diagonal entries are zero. In particular, for a non-negative integer $\alpha$, $\theta_{\chi_{lp^{s-t}}, \omega}(b^{\alpha p^t}) = (\theta_{\chi_{lp^{s-t}}, \omega}(b))^{\alpha p^t} = \omega^\alpha I_{p^t}$, where $I_{p^t}$ is the identity matrix of order $p^t$. Thus, $\psi(b^{\alpha p^t}) = p^t\omega^\alpha$ and $\psi(b^\beta) = 0$ whenever $p^t \nmid \beta$.
   Now to find out the value of $\psi(a^i)$ (for some $i$), we first prove Lemma \ref{lemm:sumprimitveroots}.
   \begin{lemma}\label{lemm:sumprimitveroots}
   	Let $M, S\in \mathbb{N}$ and let $\zeta$ be a primitive $p^M$-th root of unity, where $p$ odd prime, $0 \leq S < M$. Then
   	\[\sum_{i=0}^{p^S-1}\zeta^{(1+kp^{M-S})^i} = 0,\]
   	where $(k, p)=1$ and $0\leq k < p^S$.
   \end{lemma}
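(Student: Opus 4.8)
The plan is to recognize the exponents $\{(1+kp^{M-S})^i \bmod p^M : 0 \le i \le p^S-1\}$ as a coset, and thereby reduce the sum to a sum of roots of unity over a coset of a subgroup, which vanishes. First I would invoke Lemma \ref{NT}: since $(k,p)=1$ and $S < M$, the integer $1+kp^{M-S}$ has multiplicative order exactly $p^S$ modulo $p^M$. Hence the map $i \mapsto (1+kp^{M-S})^i$ is injective modulo $p^M$ on the range $0 \le i \le p^S-1$, so $U := \{(1+kp^{M-S})^i \bmod p^M : 0 \le i \le p^S-1\}$ is a subgroup of $(\mathbb{Z}/p^M\mathbb{Z})^\times$ of order $p^S$. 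Thus the sum in question is exactly $\sum_{u \in U} \zeta^u$.

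Next I would identify $U$ explicitly. Every element of $U$ is congruent to $1 \bmod p^{M-S}$ (because $1+kp^{M-S} \equiv 1 \bmod p^{M-S}$, and this congruence class is closed under multiplication modulo $p^M$). The subgroup of $(\mathbb{Z}/p^M\mathbb{Z})^\times$ consisting of elements congruent to $1 \bmod p^{M-S}$ has order $p^S$ (it is the kernel of the natural surjection $(\mathbb{Z}/p^M\mathbb{Z})^\times \to (\mathbb{Z}/p^{M-S}\mathbb{Z})^\times$, which has cyclic, hence transparent, kernel of that size since $p$ is odd). Comparing orders, $U$ equals this whole subgroup, i.e. $U = \{1 + j p^{M-S} : 0 \le j \le p^S - 1\}$ as a set of residues modulo $p^M$.

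Therefore
\[
\sum_{i=0}^{p^S-1}\zeta^{(1+kp^{M-S})^i} = \sum_{j=0}^{p^S-1}\zeta^{1+jp^{M-S}} = \zeta \sum_{j=0}^{p^S-1}(\zeta^{p^{M-S}})^j.
\]
Now $\zeta^{p^{M-S}}$ is a primitive $p^S$-th root of unity (since $\zeta$ is a primitive $p^M$-th root and $S \ge 1$; note $S = 0$ is excluded because $p^0 = 1 \le k < p^S$ forces $S \ge 1$), so the geometric sum $\sum_{j=0}^{p^S-1}(\zeta^{p^{M-S}})^j$ is the sum of all $p^S$-th roots of unity, which is $0$. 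This gives the claim.

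The main obstacle is the middle step: showing that $U$ is precisely the principal congruence subgroup $1 + p^{M-S}\mathbb{Z}/p^M\mathbb{Z}$ rather than some proper subgroup. This is where oddness of $p$ is essential (for $p=2$ the group of units is not cyclic and the congruence subgroups can fail to be cyclic), and it is cleanly handled by the order count: Lemma \ref{NT} pins $|U| = p^S$, the principal congruence subgroup also has order $p^S$, and one contains the other. Everything else is a routine geometric-series computation.
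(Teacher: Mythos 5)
Your proof is correct and follows essentially the same route as the paper's: both identify the exponent set $\{(1+kp^{M-S})^i \bmod p^M\}$ with the full congruence class $\{1+jp^{M-S} \bmod p^M\}$ by comparing cardinalities via Lemma \ref{NT}, and then reduce to a vanishing geometric sum of $p^S$-th roots of unity. Your kernel-of-reduction framing makes explicit the containment step that the paper only asserts, but the underlying argument is the same.
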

   \begin{proof}
As $k$ is co-prime to $p$, $\zeta^k$ is also a primitive $p^M$-th root of unity. Since $M>S$, $\zeta^{kp^{M-S}}$ is a primitive $p^S$-th root of unity so that
   	\[1+ \zeta^{kp^{M-S}} + \zeta^{2kp^{M-S}} + \cdots + \zeta^{(p^S-1)kp^{M-S}} = 0 \]
   	Multiplying by $\zeta$, we have 
   	\[\zeta + \zeta^{1+kp^{M-S}} + \zeta^{1+2kp^{M-S}} + \cdots + \zeta^{1+(p^S-1)kp^{M-S}} = 0 \]
   	By Lemma \ref{NT}, the order of $1+kp^{M-S}$ mod $p^M$ is $p^S$. Thus, $|\{(1+kp^{M-S})^i\mod p^M~:~ 0\leq i\leq p^S-1\}|=p^S$. Further, 
$$\{(1+kp^{M-S})^i\mod p^M~:~ 0\leq i\leq p^S-1\}=\{(1+i{'}kp^{M-S})\mod p^M~:~ 0\leq i{'}\leq p^S-1\}.$$   	
   	
Therefore, we can rewrite the above expression as
   	\[\zeta + \zeta^{1+kp^{M-S}} + \zeta^{(1+kp^{M-S})^2} +\cdots + \zeta^{(1+kp^{M-S})^{p^S-1}} = 0, \]
   	and hence we get the required identity.
   \end{proof}
   Further, we have 
   \begin{equation}\label{eq:charofa}
   	\theta_{\chi_{lp^{s-t}}, \omega}(a) = \left(
   	\begin{array}{ccccc}
   		\zeta^{lp^{s-t}} & 0 & 0 & \cdots & 0\\
   		0 & \zeta^{rlp^{s-t}} & 0 & \cdots & 0\\
   		0 & 0 & \zeta^{r^2lp^{s-t}} & \cdots & 0\\
   		\vdots & \vdots & \vdots & \ddots \\
   		0 & 0 & 0 & \cdots & \zeta^{r^{p^t-1}lp^{s-t}}
   	\end{array}\right),
   \end{equation}
   	where $\zeta$ is a primitive $p^n$-th root of unity. Observe that $\zeta^{lp^{s-t}}$ is a primitive $p^{n-s+t}$-th root of unity. Now by using Lemma  \ref{lemm:sumprimitveroots} with $M=n-s+t$ and $S=t$, we get
	$$\psi(a)=\sum_{i=0}^{p^t-1}\zeta^{r^ilp^{s-t}} = \sum_{i=0}^{p^t-1}\zeta^{(1+kp^{n-s})^ilp^{s-t}} = 0.$$
Moreover, $\psi(a^{p^t}) = p^t\zeta^{lp^s}$. It is routine to check that
	$$\psi(a^i) = \begin{cases}
		p^t\zeta^{ilp^{s-t}}, &\quad \text{ if } p^t\mid i,\\
		0            &\quad \text{ otherwise. }\\
	\end{cases}$$
	Hence, we can compute the character value, $\psi(a^ib^j)$ of all the elements of $G$ as follows:
	\begin{equation}\label{eq:chatractervalue}
		\psi(a^ib^j) = \begin{cases}
			p^t\omega^{j_1}\zeta^{ilp^{s-t}},  &\quad \text{ if } p^t\mid i ~ \text{and} ~ j=j_1p^t,\\
			0            &\quad \text{ otherwise. }\\
		\end{cases}
	\end{equation}
Now, suppose $\omega$ is a $p^\lambda$-th primitive root of unity in \eqref{eq:non-linearcomplexrep}, where $0 \leq \lambda \leq m-t$. Then we have
     $$\gal(\mathbb{Q}(\psi) / \mathbb{Q}) = \gal(\mathbb{Q}(\eta) / \mathbb{Q}),$$
	where, $\eta$ is a primitive $p^{n-s}$-th root of unity for $n-s \geq \lambda$ and $\eta$ is a primitive $p^\lambda$-th root of unity for $n-s < \lambda$.\\
	 Hence, by the above discussion and from Lemma \ref{SC}, we have the following remark, which summarizes the counting of Galois conjugacy classes of those irreducible complex characters of $G$ whose kernels do not contains $G{}'$.
	
\begin{remark}\label{remark:galoisclasses_splitmeta} 
	 For a fixed $t$ ($1\leq t \leq s$), we have one of the following two cases.\\
	 {\bf Case 1 ($n-s \geq m-t$).} There are $\sum_{\lambda = 0}^{m-t}\phi(p^\lambda) = p^{m-t}$ many distinct Galois conjugacy classes of size $\phi(p^{n-s})$.\\
	 {\bf Case 2 ($n-s < m-t$).} There are $\sum_{\lambda = 0}^{n-s}\phi(p^\lambda) = p^{n-s}$ many distinct Galois conjugacy classes of size $\phi(p^{n-s})$ and $\phi(p^{n-s})$ many distinct Galois conjugacy classes of size $\phi(p^\lambda)$ for $n-s < \lambda \leq m-t$.\\
\end{remark}	 
	 Let $G$ be a $p$-group and $\chi \in \Irr(G)$. If $p$ is an odd prime, then  $m_\mathbb{Q}(\chi) = 1$, otherwise $m_\mathbb{Q}(\chi)\in \{1,2\}$ (see \cite[Corollary 10.14]{I}). Hence, Schur's theory implies that there exists a unique irreducible $\mathbb{Q}$-representation $\rho$ of $G$ such that 
	 $$\rho = \bigoplus_{\sigma \in \gal(\mathbb{Q}(\psi) / \mathbb{Q})} (\theta_{\chi_{lp^{s-t}}, \omega})^\sigma$$
and $\deg \rho =\psi(1)|\gal(\mathbb{Q}(\psi) / \mathbb{Q})|$. Hence, the number of irreducible rational representations of $G$ is equal to the number of Galois conjugacy classes of $\Irr(G)$. In view of Remark \ref{remark:linear_metacyclic} and Remark \ref{remark:galoisclasses_splitmeta}, we prove Theorem \ref{thm:rationalrep}, which provides the counting of irreducible rational representations of a split metacyclic $p$-group.
	\begin{theorem}\label{thm:rationalrep}
		Let $p$ be an odd prime. Consider a finite non-abelian split metacyclic $p$-group $G = \langle a, b\mid a^{p^n} = b^{p^m} = 1, bab^{-1} = a^r \rangle$, where $n\geq 2, m \geq 1$, $(r, p^n) = 1$ and $r$ has multiplicative order $p^s$ modulo $p^n$ such that $1\leq s \leq \min \{n-1, m\}$. Then we have the following.
		\begin{enumerate}
			\item {\bf Case ($n-s \geq m$).} In this case, $G$ has 1 for $\lambda = 0$, $p^{\lambda - 1}(p+1)$ for $1 \leq \lambda \leq m$, $p^m$ for each $\lambda$ with $m < \lambda \leq n-s$ and $p^{m-t}$ for $\lambda = n-s+t$ with $1 \leq t \leq s$ many inequivalent irreducible rational representations of degree $\phi(p^\lambda)$.
			\item {\bf Case ($n-s < m$).} Suppose $m = (n-s)+k$. In this case, we have two sub-cases.
			\begin{enumerate}
				\item {\bf Sub-case ($k \leq s$).} In this sub-case, $G$ has 1 for $\lambda = 0$, $p^{\lambda - 1}(p+1)$ for $1 \leq \lambda \leq n-s$, $2p^{n-s} + (t-1)\phi(p^{n-s})$ for $\lambda = n-s+t$ with $1 \leq t \leq k-1$, $p^{n-s}+(t-1)\phi(p^{n-s})+p^{m-t}$ for $\lambda=n-s+t$ with $t=k$ and $p^{m-t}$ for $\lambda = n-s+t$ with $k+1 \leq t \leq s$ many inequivalent irreducible rational representations of degree $\phi(p^\lambda)$.
				\item {\bf Sub-case ($k > s$).} In this sub-case, $G$ has 1 for $\lambda = 0$, $p^{\lambda - 1}(p+1)$ for $1 \leq \lambda \leq n-s$, $2p^{n-s} + (t-1)\phi(p^{n-s})$ for $\lambda = n-s+t$ with $1 \leq t \leq s$ and $p^{n-s}+s\phi(p^{n-s})$ for each $\lambda$ ($n+1\leq \lambda \leq m$) many inequivalent irreducible rational representations of degree $\phi(p^\lambda)$.
			\end{enumerate}
		\end{enumerate}
	\end{theorem}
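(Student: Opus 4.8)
The plan is to assemble the count of irreducible rational representations of $G$ by combining the two disjoint families identified in Section~\ref{sec:rationalrep}: those whose kernel contains $G'$ (governed by Remark~\ref{remark:linear_metacyclic}, which in turn rests on Proposition~\ref{prop:Ab} applied to $G/G' \cong C_{p^{n-s}} \times C_{p^m}$), and those whose kernel does not contain $G'$ (governed by Remark~\ref{remark:galoisclasses_splitmeta}). Since the number of irreducible rational representations equals the number of Galois conjugacy classes of $\Irr(G)$ (Schur index $1$ for odd $p$, as noted via \cite[Corollary 10.14]{I}), and a Galois conjugacy class of a character of degree $p^t$ with field of values of degree $\phi(p^\lambda)$ over $\mathbb{Q}$ yields an irreducible $\mathbb{Q}$-representation of degree $p^t\phi(p^\lambda)$, the entire argument reduces to bookkeeping: for each target degree $\phi(p^\lambda)$, tally the contributions coming from linear characters (degree $p^t$ with $t=0$) and from each nonlinear family (degree $p^t$, $1\le t\le s$), being careful about which value of $\lambda$ each family lands on.

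First I would fix the convention that a rational representation of degree $\phi(p^\lambda)$ arising from a complex character of degree $p^t$ corresponds, on the $G/G'$ side, to $t=0$ and the abelian count of Remark~\ref{remark:linear_metacyclic}; and on the non-abelian side it corresponds to a nonlinear family with parameter $t$ whose constituent characters have field of values $\mathbb{Q}(\eta)$, where (by the computation preceding Remark~\ref{remark:galoisclasses_splitmeta}) $\eta$ is a primitive $p^{n-s}$-th root of unity when $n-s \ge \lambda$ and a primitive $p^\lambda$-th root when $n-s < \lambda$, with $\lambda$ ranging over $0 \le \lambda \le m-t$ and the degree of the resulting $\mathbb{Q}$-representation being $p^t \cdot \phi(p^{\min\{n-s,\lambda\}})$. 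Then I would split into the stated cases. In Case $n-s\ge m$, every nonlinear family has $n-s \ge m-t \ge \lambda$, so each of the $\phi(p^{n-s})$ orbits of size $p^t$ contributes one Galois class of size $\phi(p^{n-s})$ for each $\omega$ of order $p^\lambda$, i.e.\ the $p^{m-t}$ classes of Remark~\ref{remark:galoisclasses_splitmeta} Case~1, all producing $\mathbb{Q}$-representations of degree $p^t\phi(p^{n-s}) = \phi(p^{n-s+t})$; pairing this with the linear count gives exactly the asserted list (with the index shift $\lambda = n-s+t$ making the nonlinear contributions disjoint in $\lambda$ from the linear ones, since $n-s+t > n-s \ge m$). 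In Case $n-s < m$, write $m=(n-s)+k$; now for a family with parameter $t$ one has $n-s \ge m-t \iff t \ge k$, so families with $t\ge k$ behave as in Case~1 (contributing $p^{m-t}$ classes of $\mathbb{Q}$-degree $\phi(p^{n-s+t})$), while families with $t<k$ split further: the $\omega$ of order $p^\lambda$ with $\lambda \le n-s$ give $p^{n-s}$ classes of $\mathbb{Q}$-degree $\phi(p^{n-s+t})$, and those with $n-s < \lambda \le m-t$ give, for each such $\lambda$, $\phi(p^{n-s})$ classes of $\mathbb{Q}$-degree $p^t\phi(p^\lambda) = \phi(p^{t+\lambda})$, i.e.\ landing on target index $t+\lambda$ ranging over $n-s+t < t+\lambda \le m$. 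The sub-case split $k \le s$ versus $k > s$ simply records whether the range $1 \le t \le k-1$ of ``split'' families is cut off by $s$ or not.

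The main obstacle — really the only delicate point — is the reindexing in the $k \le s$ and $k > s$ sub-cases: for a fixed target degree $\phi(p^\lambda)$ with $n-s < \lambda \le m$, one must collect (i) the linear contribution $p^{n-s}$ from Remark~\ref{remark:linear_metacyclic} Case~2, (ii) for each family parameter $t$ with $1 \le t \le \min\{k-1,s\}$ and $n-s < \lambda - t$ — equivalently those $t$ with $t < \lambda - (n-s)$ — a contribution of $\phi(p^{n-s})$ from the ``large $\lambda'$'' part of that family, which for $\lambda = n-s+t_0$ amounts to $(t_0-1)\phi(p^{n-s})$ when all of $t=1,\dots,t_0-1$ are available, and (iii) possibly a contribution $p^{m-t}$ from the unique family with $t = \lambda - (n-s)$ in the borderline case. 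Summing (i)+(ii) gives $p^{n-s} + (t_0-1)\phi(p^{n-s}) = 2p^{n-s} + (t_0-2)\phi(p^{n-s})$... here one must be careful: $p^{n-s} + (t_0-1)\phi(p^{n-s})$; adding one more unit of $\phi(p^{n-s})$ versus not is exactly what distinguishes the $t \le k-1$ rows from the $t=k$ row. I would verify each coefficient in the theorem statement by this direct accumulation, and as a consistency check confirm that $\sum_\lambda (\text{count at }\lambda)\cdot\phi(p^\lambda)^2 \cdot m_{\mathbb{Q}}(\cdot)$ — or more simply the total number of rational representations — matches $\sum_{k=0}^{\min\{n-s,m\}}(n+m+1-s-2k)\phi(p^k)$ plus $s\phi(p^{n-s})$-worth of nonlinear classes summed appropriately, and that the sum of squares of the complex degrees over each Galois class recovers $|G|$ as in Proposition~\ref{prop:complexrep}(3). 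Once the case analysis and this index bookkeeping are laid out, the proof is complete with no further input.
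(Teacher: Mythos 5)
Your overall strategy is exactly the paper's: count Galois conjugacy classes of $\Irr(G)$ (legitimate since $m_{\mathbb{Q}}(\chi)=1$ for odd $p$), split them into characters whose kernel contains $G'$ (Remark \ref{remark:linear_metacyclic}) and those whose kernel does not (Remark \ref{remark:galoisclasses_splitmeta}), and assemble the coefficients degree by degree using $p^t\phi(p^{n-s})=\phi(p^{n-s+t})$ and $p^t\phi(p^{\lambda})=\phi(p^{t+\lambda})$. The case division by the sign of $n-s-(m-t)$, i.e.\ $t\ge k$ versus $t<k$, is also the paper's. So the route is the same; the problem is that the one step you yourself single out as ``the only delicate point'' is carried out incorrectly.

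Concretely: in your accumulation (i)--(iii) for a target degree $\phi(p^{\lambda})$ with $\lambda=n-s+t_0$ and $1\le t_0\le k-1$, you collect the linear contribution $p^{n-s}$, the $(t_0-1)\phi(p^{n-s})$ coming from the ``large $\lambda'$'' parts of the families $t=1,\dots,t_0-1$, and ``possibly $p^{m-t}$'' from the family $t=t_0$ in the borderline case. This omits the $p^{n-s}$ Galois classes of size $\phi(p^{n-s})$ contributed by the family $t=t_0$ itself via Case 2 of Remark \ref{remark:galoisclasses_splitmeta} (the $\omega$ of order $p^{\lambda'}$ with $\lambda'\le n-s$) --- a term you do state correctly in your second paragraph but then leave out of the tally. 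As written, (i)+(ii) gives $p^{n-s}+(t_0-1)\phi(p^{n-s})$, not the theorem's $2p^{n-s}+(t_0-1)\phi(p^{n-s})$; the intermediate identity $p^{n-s}+(t_0-1)\phi(p^{n-s})=2p^{n-s}+(t_0-2)\phi(p^{n-s})$ is false (the two sides differ by $p^{n-s-1}$); and the distinction between the $t\le k-1$ rows and the $t=k$ row is not ``one more unit of $\phi(p^{n-s})$'' but rather whether the family $t=t_0$ falls under Case 2 (contributing $p^{n-s}$) or Case 1 (contributing $p^{m-t_0}$) of Remark \ref{remark:galoisclasses_splitmeta}. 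With the missing term restored and the row distinction corrected, your accumulation reproduces every coefficient in the statement and coincides with the paper's proof.
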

\begin{proof}
\begin{enumerate}
\item In the case when $n-s\geq m$, we get 
$n-s> m-t$ for $1\leq t\leq s$. Then the proof of Theorem \ref{thm:rationalrep}(1) follows from Case 1 of Remark \ref{remark:linear_metacyclic} and Case 1 of Remark \ref{remark:galoisclasses_splitmeta}.
\item In this case, let $m=n-s+k$. 
\begin{enumerate}
\item Suppose $k\leq s$. In this sub-case $m\leq n$. Further, $n-s< m-t$ for $1\leq t\leq k-1$ and 
$n-s\geq m-t$ for $k\leq t\leq s$. Now, from Case 2 of Remark \ref{remark:linear_metacyclic}, $G$ has 1 for $\lambda = 0$ and $p^{\lambda - 1}(p+1)$ for $1 \leq \lambda \leq n-s$ many inequivalent irreducible rational representations of degree $\phi(p^\lambda)$. Again, from Case 2 of Remark \ref{remark:linear_metacyclic} and Case 2 of Remark \ref{remark:galoisclasses_splitmeta}, $G$ has  $2p^{n-s} + (t-1)\phi(p^{n-s})$ for $\lambda = n-s+t$ with $1 \leq t \leq k-1$  many inequivalent irreducible rational representations of degree $\phi(p^\lambda)$. Next, if $t=k$, then from Case 2 of Remark \ref{remark:linear_metacyclic}, Case 1 and Case 2 of Remark \ref{remark:galoisclasses_splitmeta}, $G$ has $p^{n-s}+(t-1)\phi(p^{n-s})+p^{m-t}$  many inequivalent irreducible rational representations of degree $\phi(p^{n-s+t}=p^m)$. Finally, from Case 1 of Remark \ref{remark:galoisclasses_splitmeta}, for $\lambda = n-s+t$ with $k+1 \leq t \leq s$, $G$ has $p^{m-t}$  many inequivalent irreducible rational representations of degree $\phi(p^\lambda)$.

\item Suppose $k>s$. In this sub-case, $m>n$. Further, $n-s< m-t$ for $1\leq t\leq s$. 
Here, from Case 2 of Remark \ref{remark:linear_metacyclic}, $G$ has 1 for $\lambda = 0$ and $p^{\lambda - 1}(p+1)$ for $1 \leq \lambda \leq n-s$ many inequivalent irreducible rational representations of degree $\phi(p^\lambda)$. Again, from Case 2 of Remark \ref{remark:linear_metacyclic} and Case 2 of Remark \ref{remark:galoisclasses_splitmeta}, $G$ has  $2p^{n-s} + (t-1)\phi(p^{n-s})$ for $\lambda = n-s+t$ with $1 \leq t \leq s$  many inequivalent irreducible rational representations of degree $\phi(p^\lambda)$. Finally, from Case 2 of Remark \ref{remark:linear_metacyclic} and Case 2 of Remark \ref{remark:galoisclasses_splitmeta}, for each $\lambda$ with  $n+1\leq \lambda \leq m$, $G$ has $p^{n-s}+s\phi(p^{n-s})$  many inequivalent irreducible rational representations of degree $\phi(p^\lambda)$.
\end{enumerate}
This completes the proof of Theorem \ref{thm:rationalrep}(2).
\end{enumerate}
\end{proof}

	\begin{remark}
		\begin{enumerate}
				\item Let $p$ be an odd prime and $G = \langle a, b\mid a^{p^n} = b^{p^m} = 1, bab^{-1} = a^r \rangle$, where $n\geq 2, m \geq 1$, $(r, p^n) = 1$ and $r$ has multiplicative order $p^s$ modulo $p^n$ such that $1\leq s \leq \min \{n-1, m\}$. From Lemma \ref{NT}, $r = kp^{n-s} + 1$ for some $(k, p) = 1$. Humphries and Skabelund \cite[Theorem 1.1]{HS} proved that any two split metacyclic groups with the same character tables are isomorphic. Since character values of $G$ corresponding to an irreducible complex representation are independent of $k$ (see \eqref{eq:chatractervalue}),
				$$\langle a, b\mid a^{p^n} = b^{p^m} = 1, bab^{-1} = a^{1+kp^{n-s}} \rangle \cong \langle a, b\mid a^{p^n} = b^{p^m} = 1, bab^{-1} = a^{1+p^{n-s}} \rangle.$$
				
				\item For an odd prime $p$, up to isomorphism, there are total $\min \{n-1, m\}$ distinct non-abelian groups which are isomorphic to $C_{p^n} \rtimes_{T} C_{p^m}$ for some homomorphism $T : C_{p^m} \rightarrow \aut(C_{p^n})$.
		\end{enumerate}
	\end{remark}
	
	\section{Rational group algebra of split metacyclic $p$-groups}\label{sec:mainresult}
\noindent 	Here, we present the proof of Theorem \ref{thm:wedderburnsplitmetacyclic}.
	\begin{proof}[Proof of Theorem \ref{thm:wedderburnsplitmetacyclic}.]
		Let $G = \langle a, b\mid a^{p^n} = b^{p^m} = 1, bab^{-1} = a^r \rangle$, where $n\geq 2, m \geq 1$, $(r, p^n) = 1$ and $r$ has multiplicative order $p^s$ modulo $p^n$ such that $1\leq s \leq \min \{n-1, m\}$. Let $\psi \in \Irr(G)$. Suppose $\rho$ is an irreducible $\mathbb{Q}$-representation of $G$ which affords the character $\Omega(\psi)$. Let $A_\mathbb{Q}(\psi)$ be the simple component of the Wedderburn decomposition of $\mathbb{Q}G$ corresponding to $\rho$, which is isomorphic to $M_q(D)$ for some $q\in \mathbb{N}$ and a division ring $D$. Since $m_\mathbb{Q}(\psi) = 1$ (see \cite[Corollary 10.14]{I}), $[D: Z(D)] = m_\mathbb{Q}(\psi)^2$ and $Z(D) = \mathbb{Q}(\psi)$ (see \cite[Theorem 3]{IR}), we have $D = Z(D) = \mathbb{Q}(\psi)$. Now consider $\rho =  \bigoplus_{i=1}^l{\rho_i} $, where $l=[\mathbb{Q}(\psi): \mathbb{Q}]$ and $\rho_i$ is a complex irreducible representation of $G$ affording $\psi^{\sigma_i}$ for some $\sigma_i \in \gal(\mathbb{Q}(\psi)/\mathbb{Q})$. Since $m_\mathbb{Q}(\psi) = 1$, we observe that $q = \psi(1)$ (see \cite[Theorem 2.4]{Y1}).\\
		Let $\psi \in \lin(G)$ and suppose $\rho$ is the irreducible $\mathbb{Q}$-representation of $G$ affording the character $\Omega(\psi)$. Let $\bar{\psi} \in \Irr(G/G')$ such that $\bar{\psi}(gG') = \psi(g)$. Hence, $A_\mathbb{Q}(\psi) \cong \mathbb{Q}(\bar{\psi})$. Moreover, $|\lin(G)|=|\Irr(G/G')|$ and $G/G' = \langle a^{p^s}, b \rangle \cong C_{p^{n-s}}\times C_{p^m}$. Therefore, from Remark \ref{remark:linear_metacyclic}, we have the following two cases.\\
		{\bf Case A ($n-s \geq m$).}  In this case,  the simple components of the Wedderburn decomposition of $\mathbb{Q}G$ corresponding to all irreducible $\mathbb{Q}$-representations of $G$ whose kernels contain $G'$ are
		$$\mathbb{Q} \bigoplus_{\lambda=1}^m (p^\lambda+p^{\lambda-1})\mathbb{Q}(\zeta_{p^\lambda}) \bigoplus_{\lambda=m+1}^{n-s}p^m \mathbb{Q}(\zeta_{p^\lambda})$$
		in $\mathbb{Q}G$.\\
		{\bf Case B ($n-s < m$).} In this case,  the simple components of the Wedderburn decomposition of $\mathbb{Q}G$ corresponding to all irreducible $\mathbb{Q}$-representations of $G$ whose kernels contain $G'$ are
		$$\mathbb{Q} \bigoplus_{\lambda=1}^{n-s} (p^\lambda+p^{\lambda-1})\mathbb{Q}(\zeta_{p^\lambda}) \bigoplus_{\lambda=n-s+1}^{m}p^{n-s} \mathbb{Q}(\zeta_{p^\lambda})$$
		in $\mathbb{Q}G$.
		
		Next, suppose $\psi \in \nl(G)$ and $\rho$ is the irreducible $\mathbb{Q}$-representation of $G$ affording the character $\Omega(\psi)$. Here,  $A_\mathbb{Q}(\psi) \cong M_q(D)$ and $D = \mathbb{Q}(\psi)$. Since $\cd(G) = \{p^t : 0 \leq t \leq s\}$,  $q = \psi(1) = p^t$ for some $1 \leq t \leq s$. We proceed the rest of the proof of Theorem \ref{thm:wedderburnsplitmetacyclic} in the following cases.
		\begin{enumerate}
			\item {\bf Case ($n-s \geq m$).} In this case $n-s > m-t$ for $1 \leq t \leq s$ and hence, $\mathbb{Q}(\psi) = \mathbb{Q}(\zeta_{p^{n-s}})$ (see \eqref{eq:chatractervalue}). Moreover, from Remark \ref{remark:galoisclasses_splitmeta}, for a fix $t$ ($1 \leq t \leq s$), there are $p^{m-t}$ many distinct irreducible $\mathbb{Q}$-representations of $G$ which afford the character $\Omega(\psi)$ such that $\psi \in \Irr(G)$ and $\psi(1)=p^t$. Therefore, for $n-s \geq m$, the simple components of the Wedderburn decomposition of $\mathbb{Q}G$ corresponding to all irreducible $\mathbb{Q}$-representations of $G$ whose kernels do not contain $G'$ are
			$$\bigoplus_{t=1}^s p^{m-t}M_{p^t}(\mathbb{Q}(\zeta_{p^{n-s}}))$$
			in $\mathbb{Q}G$. Hence, along with the above expression and Case A, completes the proof of Theorem \ref{thm:wedderburnsplitmetacyclic}(1).
			\item {\bf Case ($n-s < m$).} Suppose $m = (n-s)+k$. Then we have following two sub-cases.
			\begin{enumerate}
				\item {\bf Sub-case ($k \leq s$).} In this sub-case, $n-s \geq m-t$ for $k \leq t \leq s$ and hence, $\mathbb{Q}(\psi) = \mathbb{Q}(\zeta_{p^{n-s}})$ (see \eqref{eq:chatractervalue}) Thus, from Remark \ref{remark:galoisclasses_splitmeta}, for a fix $t$ (where $t \in \{k, k+1, \dots, s\} $), there are $p^{m-t}$ many distinct irreducible $\mathbb{Q}$-representations of $G$ which afford the character $\Omega(\psi)$ such that $\psi \in \Irr(G)$ and $\psi(1)=p^t$. Therefore, the simple components of the Wedderburn decomposition of $\mathbb{Q}G$ corresponding to all irreducible $\mathbb{Q}$-representations of $G$  which afford the character $\Omega(\psi)$ such that $\psi \in \Irr(G)$ and $\psi(1)=p^t$, where $k \leq t \leq s$ are
				$$\bigoplus_{t=k}^s p^{m-t}M_{p^t}(\mathbb{Q}(\zeta_{p^{n-s}}))$$
				in $\mathbb{Q}G$.\\ Further, in this sub-case, $n-s < m-t$ for $1 \leq t \leq k-1$. Hence from Remark \ref{remark:galoisclasses_splitmeta}, for a fix $t$ (where $t \in \{1, 2, \dots, k-1\} $), there are $\sum_{\lambda = 0}^{n-s}\phi(p^\lambda) = p^{n-s}$ many distinct irreducible $\mathbb{Q}$-representations of $G$ which afford the character $\Omega(\psi)$ such that $\psi \in \Irr(G)$, $\psi(1)=p^t$ and $\mathbb{Q}(\psi)= \mathbb{Q}(\zeta_{p^{n-s}})$, and there are $\phi(p^{n-s})$ many distinct irreducible $\mathbb{Q}$-representations of $G$ which afford the character $\Omega(\psi)$ such that $\psi \in \Irr(G)$, $\psi(1)=p^t$, $\mathbb{Q}(\psi)= \mathbb{Q}(\zeta_{p^{\lambda}})$ with $n-s < \lambda \leq m-t$. Therefore, the simple components of the Wedderburn decomposition of $\mathbb{Q}G$ corresponding to all irreducible $\mathbb{Q}$-representations of $G$  which afford the character $\Omega(\psi)$ such that $\psi \in \Irr(G)$ and $\psi(1)=p^t$, where $1 \leq t \leq k-1$ are
				$$\bigoplus_{t=1}^{k-1} p^{n-s}M_{p^t}(\mathbb{Q}(\zeta_{p^{n-s}})) \bigoplus_{t=1}^{k-1}\bigoplus_{\lambda=n-s+1}^{m-t} (p^{n-s}-p^{n-s-1})M_{p^t}(\mathbb{Q}(\zeta_{p^{\lambda}}))$$
				in $\mathbb{Q}G$. Hence, with the inclusion of the above expressions mentioned in this sub-case and Case B, completes the proof of Theorem \ref{thm:wedderburnsplitmetacyclic}$(2)(a)$.

				\item {\bf Sub-case ($k > s$).} In this sub-case, $n-s < m-t$ for $1 \leq t \leq s$. Hence from Remark \ref{remark:galoisclasses_splitmeta}, for a fix $t$, there are $\sum_{\lambda = 0}^{n-s}\phi(p^\lambda) = p^{n-s}$ many distinct irreducible $\mathbb{Q}$-representations of $G$ which afford the character $\Omega(\psi)$ such that $\psi \in \Irr(G)$, $\psi(1)=p^t$ and $\mathbb{Q}(\psi)= \mathbb{Q}(\zeta_{p^{n-s}})$, and there are $\phi(p^{n-s})$ many distinct irreducible $\mathbb{Q}$-representations of $G$ which afford the character $\Omega(\psi)$ such that $\psi \in \Irr(G)$, $\psi(1)=p^t$, $\mathbb{Q}(\psi)= \mathbb{Q}(\zeta_{p^{\lambda}})$ with $n-s < \lambda \leq m-t$. Therefore, the simple components of the Wedderburn decomposition of $\mathbb{Q}G$ corresponding to all irreducible $\mathbb{Q}$-representations of $G$  whose kernels do not contain $G'$ are
				$$\bigoplus_{t=1}^{s} p^{n-s}M_{p^t}(\mathbb{Q}(\zeta_{p^{n-s}})) \bigoplus_{t=1}^{s}\bigoplus_{\lambda=n-s+1}^{m-t} (p^{n-s}-p^{n-s-1})M_{p^t}(\mathbb{Q}(\zeta_{p^{\lambda}}))$$
				in $\mathbb{Q}G$. 
				Hence, with the inclusion of the above expression mentioned in this sub-case and Case B, completes the proof of Theorem \ref{thm:wedderburnsplitmetacyclic}$(2)(b)$.

			\end{enumerate}
		\end{enumerate}
		This completes the proof of Theorem \ref{thm:wedderburnsplitmetacyclic}.
	\end{proof}
	
\subsection{Examples}\label{exam:splitmetacyclic}	Here, we provide an illustration to find out the Wedderburn decomposition of split metacyclic $p$-groups by using Theorem \ref{thm:wedderburnsplitmetacyclic}.

		\begin{enumerate}
			\item Consider $G_1 = \langle a, b\mid a^{3^4} = b^{3^2} = 1, bab^{-1} = a^{10} \rangle$. Here, $n=4$, $m=2$ and $s=2$ as $r = 10 = 1+3^{4-2}$. Hence, from  Theorem \ref{thm:wedderburnsplitmetacyclic}(1), 
			$$\mathbb{Q}G_1 \cong \mathbb{Q} \bigoplus 4\mathbb{Q}(\zeta_3) \bigoplus 12\mathbb{Q}(\zeta_{9}) \bigoplus 3M_3(\mathbb{Q}(\zeta_{9})) \bigoplus M_9(\mathbb{Q}(\zeta_{9})).$$
			$G_1$ is SmallGroup$(729, 60)$ in GAP library of groups.
			\item Consider $G_2 = \langle a, b\mid a^{3^3} = b^{3^3} = 1, bab^{-1} = a^{4} \rangle$. Here, $n=3$, $m=3$ and $s=2$ as $r = 4 = 1+3^{3-2}$. Hence, from Theorem \ref{thm:wedderburnsplitmetacyclic}(2)(a), 
			$$\mathbb{Q}G_2 \cong \mathbb{Q} \bigoplus 4\mathbb{Q}(\zeta_3) \bigoplus 3\mathbb{Q}(\zeta_{9}) \bigoplus 3\mathbb{Q}(\zeta_{27}) \bigoplus 3M_3(\mathbb{Q}(\zeta_{3})) \bigoplus 2M_3(\mathbb{Q}(\zeta_{9})) \bigoplus 3M_9(\mathbb{Q}(\zeta_{3})).$$
			$G_2$ is SmallGroup$(729, 22)$ in GAP library of groups.
			\item Consider $G_3 = \langle a, b\mid a^{3^2} = b^{3^3} = 1, bab^{-1} = a^{4} \rangle$. Here, $n=2$, $m=3$ and $s=1$ as $r = 4 = 1+3^{2-1}$.Hence, from  Theorem \ref{thm:wedderburnsplitmetacyclic}(2)(b), 
			$$\mathbb{Q}G_3 \cong \mathbb{Q} \bigoplus 4\mathbb{Q}(\zeta_3) \bigoplus 3\mathbb{Q}(\zeta_{9}) \bigoplus 3\mathbb{Q}(\zeta_{27}) \bigoplus 3M_3(\mathbb{Q}(\zeta_{3})) \bigoplus 2M_3(\mathbb{Q}(\zeta_{9})).$$
			$G_3$ is SmallGroup$(243, 21)$ in GAP library of groups.
		\end{enumerate}

	\section{Acknowledgements}
	Ram Karan acknowledges University Grants Commission, Government of India.

\end{document}